\newcommand{\ds}{\displaystyle}
\theoremstyle{plain}
\newtheorem{theorem}{Theorem}[section]
\newtheorem{remark}[theorem]{Remark}
\newtheorem{lemma}[theorem]{Lemma}
\newtheorem{proposition}[theorem]{Proposition}
\def \[{\begin{equation}}
\def \]{\end{equation}}
\newif \ifLastSection \LastSectionfalse
\numberwithin{equation}{section}
\newcommand{\R}{{\mathbb R}}
\begin{document}
\title{   The sharp existence of constrained minimizers for the $L^2$-critical Schr\"{o}dinger-Poisson system and Schr\"{o}dinger equations}

\author{Hongyu Ye\thanks{a:Partially  supported  by  NSFC No: 11501428,  NSFC No: 11371195} \thanks{b:
E-mail address: yyeehongyu@163.com} \\ \small {College of Science, Wuhan University of Science and Technology,}
\\
\small{ Wuhan 430065, P. R. China}}
\date{}
\maketitle

\begin{abstract}
In this paper, we study the existence of minimizers for a class of constrained minimization problems derived from the Schr\"{o}dinger-Poisson equations:
$$-\Delta u+V(x)u+(|x|^{-1}*u^2)u-|u|^\frac{4}{3}u=\lambda u,~~x\in\R^3$$
on the $L^2$-spheres $\widetilde{S}(c)=\{u\in H^1(\R^3)|~\int_{\R^3}V(x)u^2dx<+\infty,~|u|_2^2=c>0\}$. If $V(x)\equiv0$, then by a different method from Jeanjean and Luo [Z. Angrew. Math. Phys. 64 (2013), 937-954], we show that there is no minimizer for all $c>0$; If $0\leq V(x)\in  L^{\infty}_{loc}(\R^3)$ and $\lim\limits_{|x|\rightarrow+\infty}V(x)=+\infty$, then a minimizer exists if and only if $0<c\leq c^*=|Q|_2^2$, where $Q$ is the unique positive radial solution of $-\Delta u+u=|u|^{\frac{4}{3}}u,$ $x\in\R^3$. Our results are sharp. We also extend some results to constrained minimization problems on $\widetilde{S}(c)$ derived from Schr\"{o}dinger operators:
$$F_\mu(u)=\frac{1}{2}\ds\int_{\R^N}|\nabla u|^2-\frac{\mu}2\ds\int_{\R^N}V(x)u^2-\frac{N}{2N+4}|u|^\frac{2N+4}{N}$$
where $0\leq V(x)\in L^{\infty}_{loc}(\R^N)$ and $\lim\limits_{|x|\rightarrow+\infty}V(x)=0$. We show that if $\mu>\mu_1$ for some $\mu_1>0$, then a minimizer exists for each $c\in(0,c^*)$.

\noindent{\bf Keywords:} Schr\"{o}dinger-Poisson equation; Schr\"{o}dinger equation; Sharp existence; Constrained minimization; Subadditivity inequality
\\
\noindent{\bf Mathematics Subject Classification(2010):} 35J60, 35A15\\
\end{abstract}

\section{ Introduction and main result}
In the past years, the following Schr\"{o}dinger-Poisson type equation with pure power nonlinearities:
\begin{equation}\label{1.1}
i\psi_t+\Delta\psi-(|x|^{-1}*|\psi|^2)\psi+|\psi|^{p-2}\psi=0~~\hbox{in}~\R^3
\end{equation}
has been well studied, where $\psi(x,t):\R^3\times[0,T)\rightarrow\mathbb{C}$ is the wave function, $2<p<6$ and $|x|^{-1}*u^2$ denotes a repulsive nonlocal Coulombic potential. Equation \eqref{1.1} arises from approximation of the Hartree-Fock equation which describes a quantum mechanical of many particles, see e.g. \cite{bgg,ls,li,m}.

One usually searches for the existence of stationary solutions $\psi(x,t)=e^{-i\lambda t}u(x)$ to \eqref{1.1}, where $\lambda\in\R$ and $u:\R^3\rightarrow\R$ is a function to be founded, then \eqref{1.1} is reduced to be the following system
\begin{equation}\label{1.2}\left\{%
\begin{array}{ll}
    -\Delta u+\phi_uu-\lambda u=|u|^{p-2}u, & \hbox{$x\in\R^3$}, \\
   -\Delta \phi_u=4\pi u^2, & \hbox{$x\in \R^3$},\\
\end{array}%
\right.\end{equation}
where $\phi_u=|x|^{-1}*u^2$ is understood as the scalar potential of the electrostatic field uniquely generated by the charge density $u^2$. The case where $\lambda\in\R$ is a fixed and assigned parameter has been extensively studied, see \cite{aru,apa,dm,k1,k2,r1,r2} and the reference therein. In such case, solutions of \eqref{1.2} correspond to critical points of the following functional
$$\Psi(u)=\frac{1}{2}\ds\int_{\R^3}|\nabla u|^2-\frac{1}{2}\lambda\ds\int_{\R^3}u^2+\frac{1}{4}\ds\int_{\R^3}\phi_uu^2
-\frac{1}{p}\ds\int_{\R^3}|u|^{p}.$$
However, nothing can be given a priori on the $L^2$-norm of the solutions. Since the physicists are usually interested in ``$\hbox{normalized~solutions}"$, i.e. solutions with prescribed $L^2$-norm, it is interesting for us to study whether \eqref{1.2} has normalized solutions. For any fixed $c>0$, a solution of \eqref{1.2} with $|u|_2^2=c$ can be viewed as a critical point of the following functional
\begin{equation}\label{1.3}
E(u)=\frac{1}{2}\ds\int_{\R^3}|\nabla u|^2+\frac{1}{4}\ds\int_{\R^3}\phi_uu^2-\frac{1}{p}\ds\int_{\R^3}|u|^{p}
\end{equation}
constrained on the $L^2$-spheres in $H^1(\R^3)$:
$$S(c)=\{u\in H^1(\R^3)|~|u|_2^2=c,~c>0\}.$$
Note that the frequency $\lambda$ now is no longer by imposed but instead appears as a Lagrange multiplier. We call $(u_c,\lambda_c)\in H^1(\R^3)\times\R$ a couple of solution to \eqref{1.2} if $u_c$ is a critical point of $E|_{S(c)}$ and $\lambda_c$ is the associated Lagrange parameter.

Set \begin{equation}\label{1.4}
e_c:=\inf\limits_{u\in S(c)}E(u).
\end{equation}
Then minimizers of $e_c$ are exactly constrained critical points of $E(u)$ on $S(c)$. It seems that $p=\frac{10}{3}$ is the $L^2$-critical exponent for \eqref{1.4}, i.e. for all $c>0$, $e_c>-\infty$ if $2<p<\frac{10}{3}$ and $e_c=-\infty$ if $\frac{10}{3}<p<6$. There have been some papers considering the existence of minimizers of $e_c$, see e.g. \cite{bs2,bs1,cdss,jl,ss}. In \cite{bs2}, Bellazzini and Siciliano proved that there exists at least one minimizer for \eqref{1.4} provided that $p\in(2,3)$ and $c>0$ is small enough. In \cite{ss}, Sanchez and Soler considered \eqref{1.4} with $p=\frac{8}{3}$ and showed that there exists $c_0>0$ small such that minimizers exist for any $c\in(0,c_0)$. In \cite{bs1}, Bellazzini and Siciliano proved that \eqref{1.4} admits at least one minimizer if $p\in(3,\frac{10}{3})$ and $c>0$ is large enough. In \cite{jl}, Jeanjean and Luo showed the sharp nonexistence results for \eqref{1.4} with $p\in[3,\frac{10}{3}]$, i.e. for $p\in(3,\frac{10}{3})$, there exists $c_1>0$ such that \eqref{1.4} has a minimizer if and only if $c\geq c_1$. When $p=3$ or $p=\frac{10}{3}$, no minimizer exists for all $c>0$. Furthermore, when $p=3$, Kikuchi in \cite{k1} considered a minimization problem $e_{\Lambda,c}:=\inf\limits_{u\in S(c)}E_\Lambda(u),$ where
$$E_\Lambda(u)=\frac{1}{2}\ds\int_{\R^3}|\nabla u|^2+\frac{1}{4}\ds\int_{\R^3}\phi_uu^2-\frac{\Lambda}{3}\ds\int_{\R^3}|u|^{3}$$
 and proved that there exists $\Lambda_0>0$ such that $e_{\Lambda,c}$ is attained at any $c>0$ and $\Lambda>\Lambda_0$. For $\frac{10}{3}<p<6$, problem \eqref{1.4} does not work. It has been proved in \cite{bjl} that there exists at least one critical point of $E(u)$ restricted to $S(c)$ with a minimax characterization.

When $p=\frac{10}{3}$, as far as we know, in the literature there is just one paper \cite{jl} concerning about such case, which showed that for some $c_2\in(0,+\infty)$, $e_c=0$ if $c\in(0,c_2)$ and $e_c=-\infty$ if $c>c_2$. However, they did not give the accurate expression of $c_2$ and $e_{c_2}$ is unknown yet. In this paper, we use an alternative method to study problem \eqref{1.4} with $p=\frac{10}{3}$ and succeeded in obtaining a threshold value $c^*$ satisfying that $e_c=0$ if $c\in(0,c^*]$ and $e_c=-\infty$ if $c>c^*$. Our result is sharp.

Recall from \cite{gnn}\cite{k} that the following scalar field equation
\begin{equation}\label{1.11}
-\Delta u+u=|u|^{\frac{4}{N}}u,~~~x\in\R^N,
\end{equation}
up to translations, has a unique positive least energy and radially symmetric solution $Q\in H^1(\R^N)$, where $N\geq1$. It is proved in \cite{gnn} that $Q$ is monotonically decreasing away from the origin and
\begin{equation}\label{1.5}
Q(x),~|\nabla Q(x)|=O(|x|^{-\frac{1}{2}}e^{-|x|})~~~~\hbox{as}~~|x|\rightarrow+\infty.
\end{equation}
Moreover,
\begin{equation}\label{1.13}
\ds\int_{\R^N}|Q|^{\frac{2N+4}{N}}=\frac{N+2}{N}\ds\int_{\R^N}|\nabla Q|^{2}=\frac{N+2}{2}\ds\int_{\R^N}|Q|^{2}.
\end{equation}
We recall from \cite{w} the following Gagliardo-Nirenberg inequality with the best constant:
\begin{equation}\label{1.6}
\ds\int_{\R^N}|u|^{\frac{2N+4}{N}}\leq \frac{N+2}{N|Q|_2^{\frac{4}{N}}}\ds\int_{\R^N}|\nabla u|^2\left(\ds\int_{\R^N}|u|^2\right)^{\frac{2}{N}},
\end{equation}
where equality holds for $u=Q$. Then we get our main result:

\begin{theorem}\label{th1.1}\ \ Let $p=\frac{10}{3}$ and $c^*:=|Q|_2^2$, then $e_c=\left\{%
\begin{array}{ll}
    0, & \hbox{$0<c\leq c^*$}, \\
   -\infty, & \hbox{$c>c^*$}.\\
\end{array}%
\right.$
Moreover,

(1)~~$e_c$ has no minimizer for all $c>0$.

(2)~~for any $c\in(0,c^*]$, there is no critical point of $E(u)$ constrained on $S(c)$.
\end{theorem}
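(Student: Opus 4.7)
The plan is to combine the Gagliardo--Nirenberg inequality \eqref{1.6} (with $N=3$) with mass-preserving dilations, and, for the critical-point part of the claim, with a Pohozaev identity for the Schr\"odinger--Poisson equation.

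First I would compute $e_c$. Since the Coulomb term $\ds\int_{\R^3}\phi_u u^2$ is non-negative, inequality \eqref{1.6} applied with $N=3$ gives, for every $u\in S(c)$,
\[
E(u)\geq \tfrac{1}{2}\bigl(1-(c/c^*)^{2/3}\bigr)\ds\int_{\R^3}|\nabla u|^2+\tfrac{1}{4}\ds\int_{\R^3}\phi_u u^2,
\]
so $e_c\geq 0$ whenever $c\leq c^*$. For the matching upper bound I would use the $L^2$-preserving dilation $u_t(x)=t^{3/2}u(tx)$, which by a change of variable yields
\[
E(u_t)=\tfrac{t^2}{2}\ds\int_{\R^3}|\nabla u|^2-\tfrac{3t^2}{10}\ds\int_{\R^3}|u|^{10/3}+\tfrac{t}{4}\ds\int_{\R^3}\phi_u u^2,
\]
and sending $t\to 0^+$ for any fixed $u\in S(c)$ gives $e_c\leq 0$, so $e_c=0$ on $(0,c^*]$. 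For $c>c^*$ I would test with $v=\sqrt{c/c^*}\,Q\in S(c)$ and use \eqref{1.13} to rewrite the $t^2$-coefficient of $E(v_t)$ as $\tfrac{1}{2}\tfrac{c}{c^*}\bigl(1-(c/c^*)^{2/3}\bigr)\ds\int_{\R^3}|\nabla Q|^2<0$; since the Coulomb contribution to $E(v_t)$ is only linear in $t$, letting $t\to+\infty$ yields $E(v_t)\to-\infty$, hence $e_c=-\infty$.

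Assertion (1) is then immediate for $c>c^*$. For $c\leq c^*$, a minimizer $u$ would satisfy $E(u)=0$, which combined with the lower bound above forces $\ds\int_{\R^3}\phi_u u^2=0$; since this nonlocal quantity is strictly positive as soon as $u\not\equiv 0$, this contradicts $|u|_2^2=c>0$.

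For assertion (2), I would suppose $u\in S(c)$ is a constrained critical point of $E|_{S(c)}$, so that there exists $\lambda\in\R$ with $-\Delta u+\phi_u u-|u|^{4/3}u=\lambda u$ in $\R^3$. Testing this equation with $u$ yields
\[
\ds\int_{\R^3}|\nabla u|^2+\ds\int_{\R^3}\phi_u u^2=\ds\int_{\R^3}|u|^{10/3}+\lambda c,
\]
and the Pohozaev identity for the Schr\"odinger--Poisson system --- obtained by differentiating the Lagrangian along $u_\sigma(x)=u(\sigma x)$, using that the Coulomb term scales by $\sigma^{-5}$ in $\R^3$ --- reads
\[
\tfrac{1}{2}\ds\int_{\R^3}|\nabla u|^2+\tfrac{5}{4}\ds\int_{\R^3}\phi_u u^2=\tfrac{9}{10}\ds\int_{\R^3}|u|^{10/3}+\tfrac{3\lambda}{2}c.
\]
Eliminating $\lambda c$ between these two identities produces the key relation
\[
\ds\int_{\R^3}|\nabla u|^2+\tfrac{1}{4}\ds\int_{\R^3}\phi_u u^2=\tfrac{3}{5}\ds\int_{\R^3}|u|^{10/3},
\]
and inserting \eqref{1.6} into its right-hand side gives $\bigl(1-(c/c^*)^{2/3}\bigr)\ds\int_{\R^3}|\nabla u|^2+\tfrac{1}{4}\ds\int_{\R^3}\phi_u u^2\leq 0$. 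For $c<c^*$ this forces $u\equiv 0$ at once; for $c=c^*$ it forces $\ds\int_{\R^3}\phi_u u^2=0$, again giving $u\equiv 0$, in both cases contradicting $|u|_2^2=c>0$. The main obstacle I anticipate is the careful derivation of the Pohozaev identity in the presence of the nonlocal Coulomb term, but this is by now standard in the Schr\"odinger--Poisson literature.
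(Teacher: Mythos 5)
Your proposal is correct and follows essentially the same route as the paper: the Gagliardo--Nirenberg bound \eqref{1.6} plus the mass-preserving dilation $t^{3/2}u(tx)$ for $e_c=0$ on $(0,c^*]$, the rescaled $\sqrt{c/c^*}\,Q$ together with \eqref{1.13} for $e_c=-\infty$ when $c>c^*$, strict positivity of the Coulomb term to rule out minimizers, and the Nehari--Pohozaev combination (your relation $\int_{\R^3}|\nabla u|^2+\tfrac14\int_{\R^3}\phi_uu^2=\tfrac35\int_{\R^3}|u|^{10/3}$ is exactly the paper's $A(u_c)+\tfrac12B(u_c)=C(u_c)$) to exclude constrained critical points. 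No substantive differences.
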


In order to obtain minimizers, we try to add a nonnegative perturbation term to the right-hand side of \eqref{1.3}, i.e. we consider the following new functional
\begin{equation}\label{1.7}
I(u)=\frac{1}{2}\ds\int_{\R^3}|\nabla u|^2+\frac12\ds\int_{\R^3}V(x)u^2+\frac{1}{4}\ds\int_{\R^3}\phi_uu^2
-\frac{3}{10}\ds\int_{\R^3}|u|^{\frac{10}{3}},
\end{equation} where the potential $V:\R^3\rightarrow\R$ is assumed to satisfy the following condition:
$$V(x)\in L^{\infty}_{loc}(\R^3),~~\inf\limits_{x\in\R^3}V(x)=0~~\hbox{and}~
~\lim\limits_{|x|\rightarrow+\infty}V(x)=+\infty.\eqno{(V_1)}$$
Based on $(V_1)$, we introduce a Sobolev space $\mathcal{H}=\left\{u\in H^1(\R^3)|~\int_{\R^3}V(x)u^2<+\infty\right\}$ with its associated norm $$\|u\|_{\mathcal{H}}=\left(\int_{\R^3}[|\nabla u|^2+(1+V(x))u^2]\right)^{\frac{1}{2}}.$$
Set
\begin{equation}\label{1.8}
I_c:=\inf\limits_{u\in\widetilde{S}(c)}I(u),
\end{equation}
where $\widetilde{S}(c)=\left\{u\in \mathcal{H}|~|u|_2^2=c,c>0\right\}.$

Our main results are as follows:

\begin{theorem}\label{th1.2}\ \ Suppose that $V(x)$ satisfies $(V_1)$.

(1)~~If $0<c\leq c^*$, there exists at least one minimizer for \eqref{1.8} and $I_c>0$.

(2)~~If $c>c^*,$ $I_c=-\infty$ and there is no minimizer for \eqref{1.8}.
\end{theorem}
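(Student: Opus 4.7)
The proof splits into two complementary parts: the nonexistence at $c>c^*$ via an explicit family of rescaled test functions, and the positivity plus attainment of $I_c$ for $0<c\leq c^*$ via Gagliardo-Nirenberg estimates combined with the compact embedding $\mathcal{H}\hookrightarrow L^p(\R^3)$, $p\in[2,6)$, that $(V_1)$ affords.

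For $c>c^*$, I plan to test $I$ on a localized dilation of $Q$. Fix a smooth cutoff $\psi\in C_c^\infty(\R^3)$ with $\psi\equiv 1$ near $0$, set $\tilde u_\tau(x):=\tau^{3/2}Q(\tau x)\psi(x)$, and rescale it to $u_\tau\in\widetilde{S}(c)$ by a scalar multiple. As $\tau\to+\infty$, the identities (1.13) give
$$\tfrac{1}{2}\int|\nabla u_\tau|^2-\tfrac{3}{10}\int|u_\tau|^{10/3}=\tfrac{1}{2}\Bigl[\tfrac{c}{c^*}-\bigl(\tfrac{c}{c^*}\bigr)^{5/3}\Bigr]\tau^2\int|\nabla Q|^2+o(\tau^2)\sim -\tau^2,$$
while the Poisson term scales as $\tau$ and the cutoff keeps $\int Vu_\tau^2$ uniformly bounded (the localization confines $u_\tau$ to a fixed ball). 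Hence $I(u_\tau)\to-\infty$, so $I_c=-\infty$ and no minimizer can exist.

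For $0<c\leq c^*$, inequality (1.6) yields
$$I(u)\geq\tfrac{1}{2}\Bigl[1-\bigl(\tfrac{c}{c^*}\bigr)^{2/3}\Bigr]\int|\nabla u|^2+\tfrac{1}{2}\int Vu^2+\tfrac{1}{4}\int\phi_u u^2\geq 0,$$
and I will upgrade $I_c\geq0$ to $I_c>0$ by contradiction. Assume $u_n\in\widetilde{S}(c)$ with $I(u_n)\to0$. When $c<c^*$ the bracket is strictly positive, forcing $\int|\nabla u_n|^2\to0$ and $\int Vu_n^2\to0$; the compact embedding then delivers $u_n\to u$ in $L^2$ with $|u|_2^2=c$ and $\nabla u\equiv0$, i.e. $u$ is a nonzero constant in $L^2(\R^3)$, absurd. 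When $c=c^*$ the bracket vanishes and only $\int Vu_n^2\to0$ and $\int\phi_{u_n}u_n^2\to0$ survive; since $(V_1)$ makes $\{V\leq1\}$ bounded, the first forces $\int_{B_R}u_n^2\to c^*$ for some $R$, and the crude estimate $|x-y|^{-1}\geq(2R)^{-1}$ on $B_R\times B_R$ gives $\int\phi_{u_n}u_n^2\geq(c^*)^2/(2R)>0$, a contradiction.

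Attainment follows by the direct method once a minimizing sequence $\{u_n\}\subset\widetilde{S}(c)$ is shown bounded in $\mathcal{H}$: compactness of $\mathcal{H}\hookrightarrow L^p(\R^3)$ then gives $u_n\to u$ strongly in $L^p$ for $p\in[2,6)$, so $|u|_2^2=c$, the Poisson integral passes to the limit via Hardy-Littlewood-Sobolev in $L^{12/5}$, and weak-$\mathcal{H}$ lower semicontinuity delivers $I(u)\leq\liminf I(u_n)=I_c$. For $c<c^*$ the bracket inequality makes $\int|\nabla u_n|^2$ automatically bounded and the argument closes immediately. The main obstacle is $c=c^*$: the bracket no longer controls the kinetic norm, which could conceivably blow up. I plan to exclude this by a scaling argument. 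If $\tau_n^2:=\int|\nabla u_n|^2/\int|\nabla Q|^2\to\infty$, set $v_n(y):=\tau_n^{-3/2}u_n(y/\tau_n)\in\widetilde{S}(c^*)$, so that $\int|\nabla v_n|^2=\int|\nabla Q|^2$ and $\tfrac{1}{2}\int|\nabla v_n|^2-\tfrac{3}{10}\int|v_n|^{10/3}=\tau_n^{-2}\cdot O(1)\to0$. Standard concentration-compactness for near-extremizers of (1.6) provides translations $y_n\in\R^3$ with $v_n(\cdot-y_n)\to Q$ in $H^1$, whence the translation-invariant Poisson energy satisfies $\int\phi_{v_n}v_n^2\to\int\phi_Q Q^2>0$. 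Combined with the scaling identity $\int\phi_{u_n}u_n^2=\tau_n\int\phi_{v_n}v_n^2$, this forces $\int\phi_{u_n}u_n^2\to+\infty$, contradicting the a priori bound $\int\phi_{u_n}u_n^2\leq 4I(u_n)\leq C$. Hence $\int|\nabla u_n|^2$ remains bounded, and the direct method closes the proof.
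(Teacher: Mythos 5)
Your proposal is correct in substance and, for the subcritical existence ($c<c^*$, via coercivity from \eqref{1.6} plus the compact embedding $\mathcal{H}\hookrightarrow L^p$) and the nonexistence for $c>c^*$ (cutoff rescalings of $Q$), it coincides with the paper's argument. The interesting divergence is at $c=c^*$. The paper does not work with a generic minimizing sequence at the critical mass: it first proves that $c\mapsto I_c/c^2$ is strictly decreasing, takes $c_n=c^*(1-\tfrac1{2n})\nearrow c^*$ with exact minimizers $u_n\in\widetilde S(c_n)$, and rules out $A(u_n)\to\infty$ by the blow-up rescaling $w_n=\varepsilon_n^{3/2}u_n(\varepsilon_n\cdot)$, Lions' vanishing lemma to produce a nontrivial weak limit after translation, and Fatou applied to the Coulomb term, which contradicts $B(\widetilde w_n)=\varepsilon_nB(u_n)\to0$; it then needs $\limsup I_{c_n}\le I_{c^*}$ to close. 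You instead take a direct minimizing sequence at $c^*$ (which works equally well, since $I(u_n)\to I_{c^*}<\infty$ already bounds $B,D$ and $A-C$) and rule out kinetic blow-up by the same scaling identity $B(u_n)=\tau_nB(v_n)$; the mechanism --- the Poisson energy scales like $\tau$ under concentration and so must blow up, contradicting its a priori bound --- is identical. Your route is shorter and avoids the monotonicity detour, but it leans on the compactness of near-extremizers of \eqref{1.6} modulo translations ($v_n(\cdot-y_n)\to Q$ in $H^1$) as a black box; that statement is true but nontrivial and would need a citation or proof, whereas all you actually need is $\liminf B(v_n(\cdot-y_n))>0$, which follows from the elementary non-vanishing plus Fatou argument the paper uses. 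Finally, your direct proof that $I_{c^*}>0$ --- mass is confined to a fixed ball by the potential well, so $\int\phi_uu^2\ge (2R)^{-1}\bigl(\int_{B_R}u^2\bigr)^2$ is bounded below --- is a nice self-contained alternative to the paper's implicit deduction of positivity from attainment of the minimum.
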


We give the main idea of the proof of Theorem \ref{th1.2}. We notice that under $(V_1)$, the embedding $\mathcal{H}\hookrightarrow L^p(\R^3)$, $p\in[2,6)$ is compact (see e.g. \cite{bw}). To prove Theorem \ref{th1.2}, it is enough to show that each minimizing sequence of $I_c$ is bounded in $\mathcal{H}$. By \eqref{1.6}, when $c<c^*$, the boundedness of minimizing sequences for $I_c$ can be easily obtained, then minimizers exist. However, when $c=c^*$, the boundedness of the minimizing sequence cannot be similarly proved by using \eqref{1.6}. To do so, we need to consider the behavior of the function $c\rightarrow I_c$. The properties can be summarized in the following theorem.

\begin{proposition}\label{prop1.3}\ \
Suppose that $V(x)$ satisfies $(V_1)$. Then

(1)~~$\frac{I_c}{c^2}$ is strictly decreasing on $(0,c^*]$.

(2)~~$c\mapsto I_c$ is continuous on $(0,c^*]$.

(3)~~$\lim\limits_{c\rightarrow0^+}I_{c}=0.$
\end{proposition}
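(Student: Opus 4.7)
My approach is to prove the three parts in order; part~(1), which does the real work, also underlies the hardest case of~(2).

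\textbf{(1) Strict monotonicity.} Fix $0 < c_1 < c_2 \le c^*$, set $\tau = c_2/c_1 > 1$, and let $\{u_n\} \subset \widetilde{S}(c_1)$ be a minimizing sequence for $I_{c_1}$. With the shorthand $A_n = \tfrac12\int|\nabla u_n|^2 + \tfrac12\int V u_n^2$, $B_n = \tfrac14\int\phi_{u_n}u_n^2$, $C_n = \tfrac{3}{10}\int|u_n|^{10/3}$, the function $v_n := \sqrt{\tau}\,u_n \in \widetilde{S}(c_2)$ satisfies
\begin{equation*}
I(v_n)-\tau^2 I(u_n) \;=\; -\tau(\tau-1)A_n + \tau^{5/3}(\tau^{1/3}-1)C_n
\end{equation*}
by direct scaling. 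Factoring out $\tau(\tau^{1/3}-1)$ and using the Gagliardo-Nirenberg inequality~(1.6) in the form $C_n \le (c_1/c^*)^{2/3}A_n \le A_n$, the right-hand side is bounded above by $-\tau(\tau^{2/3}-1)A_n$. The crucial remaining ingredient is a uniform positive lower bound $\liminf_n A_n \ge \alpha > 0$, which I prove by contradiction: if $A_{n_k}\to 0$ along a subsequence then $\{u_{n_k}\}$ is bounded in $\mathcal H$; the compact embedding $\mathcal H \hookrightarrow L^2(\R^3)$ produces a subsequential $L^2$-limit $u$ with $|u|_2^2 = c_1 > 0$, yet weak lower semicontinuity forces $\nabla u \equiv 0$, making $u$ a nonzero constant in $L^2(\R^3)$---impossible. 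Dividing the resulting inequality by $c_2^2 = \tau^2 c_1^2$ then yields $I_{c_2}/c_2^2 < I_{c_1}/c_1^2$.

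\textbf{(2) Continuity.} Fix $c_0 \in (0, c^*]$ and let $c_n \to c_0$ with $c_n \in (0, c^*]$. Given $\varepsilon > 0$ and $u \in \widetilde{S}(c_0)$ with $I(u) < I_{c_0} + \varepsilon$, the test functions $\sqrt{c_n/c_0}\,u \in \widetilde{S}(c_n)$ have energies depending continuously on $c_n/c_0$ (each nontrivial term picks up a fixed power), so $\limsup_n I_{c_n} \le I_{c_0}$. For the matching $\liminf$ when $c_0 < c^*$, I pick near-minimizers $u_n$ with $I(u_n) \le I_{c_n} + 1/n$; the Gagliardo-Nirenberg bound $I(u) \ge \tfrac12[1-(c/c^*)^{2/3}]\int|\nabla u|^2$ together with the just-proved $\limsup$ estimate keeps $A_{u_n}$ uniformly bounded (then $B_{u_n}, C_{u_n}$ follow from Hardy-Littlewood-Sobolev with Sobolev interpolation, and from Gagliardo-Nirenberg, respectively). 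The scaled functions $\sqrt{c_0/c_n}\,u_n \in \widetilde{S}(c_0)$ then satisfy $I(\sqrt{c_0/c_n}\,u_n) = I(u_n) + o(1)$, giving $I_{c_0} \le \liminf_n I_{c_n}$. At the boundary $c_0 = c^*$, part~(1) supplies the lower bound directly: $I_{c_n}/c_n^2 \ge I_{c^*}/(c^*)^2$ gives $I_{c_n} \ge (c_n/c^*)^2 I_{c^*} \to I_{c^*}$.

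\textbf{(3) Limit at $0$.} Fixing any $\varphi \in C_c^\infty(\R^3)$ with $|\varphi|_2 = 1$, the scaled function $\sqrt{c}\,\varphi \in \widetilde{S}(c)$ has $I(\sqrt{c}\,\varphi) = O(c) \to 0$, so $\limsup_{c\to 0^+} I_c \le 0$. On the other hand, the Gagliardo-Nirenberg inequality~(1.6) yields $I(u) \ge \tfrac12[1-(c/c^*)^{2/3}]\int|\nabla u|^2 \ge 0$ for every $u \in \widetilde S(c)$ with $c \le c^*$, so $I_c \ge 0$, and the two bounds combine to give $\lim_{c\to 0^+} I_c = 0$. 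The principal difficulty is the uniform positivity $\liminf_n A_n > 0$ in part~(1): it is precisely what promotes the scaling estimate to \emph{strict} monotonicity, and it is also what distinguishes the perturbed problem studied here (where $I_c > 0$) from the unperturbed case $e_c \equiv 0$ of Theorem~1.1. The compact embedding $\mathcal H \hookrightarrow L^p(\R^3)$ for $p \in [2,6)$, made available by the trapping condition $(V_1)$, is exactly the tool that powers this positivity.
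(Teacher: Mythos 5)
Your argument is correct, and while it rests on the same two pillars as the paper's proof---the dilation $u\mapsto\sqrt{t}\,u$ and the Gagliardo--Nirenberg bound $C(u)\le(c/c^*)^{2/3}A(u)$---the execution of part (1) is genuinely different. The paper (Step 2 of the proof of Theorem \ref{th1.2}) takes an actual minimizer $u_1\in\widetilde S(c_1)$, whose existence for $c_1<c^*$ is already established, scales it up, and shows the auxiliary function $f(t)=(1-t)(A(u_1)+D(u_1))+(t-t^{2/3})C(u_1)$ is negative for $t>1$ by analyzing $f'$ and $f''$; strictness there comes from $C(u_1)<A(u_1)+D(u_1)$ for the single nonzero function $u_1$. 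You instead work with an arbitrary minimizing sequence, factor the scaling identity algebraically (your bound $-\tau(\tau-1)A_n+\tau^{5/3}(\tau^{1/3}-1)C_n\le-\tau(\tau^{2/3}-1)A_n$ under $C_n\le A_n$ checks out), and extract strictness from the uniform lower bound $\liminf_n A_n>0$, proved via the compact embedding $\mathcal H\hookrightarrow L^2(\R^3)$. This makes your part (1) independent of the existence theory of Theorem \ref{th1.2}, at the price of the extra compactness step; the paper's version is shorter once minimizers are in hand. Likewise, in (2) you establish two-sided continuity at interior points directly with near-minimizers and uniform a priori bounds, invoking monotonicity only at the endpoint $c=c^*$, whereas the paper reduces everything to a one-sided limit via monotonicity and scales the minimizer at level $c$; and in (3) your fixed compactly supported test function replaces the paper's rescaled minimizer at $c^*$, which slightly simplifies the logical dependencies. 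All steps are sound.
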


Proposition \ref{prop1.3} (1) gives us a cue to choose a sequence of minimizers for $I_{c}$ with $c<c^*$ to be the desired bounded minimizing sequence for $I_{c^*}$. We succeeded in doing so by taking $c_n=c^*(1-\frac1{2n})\nearrow c^*$ and the related minimizer sequence $\{u_n\}\in \widetilde{S}(c_n)$ as the special bounded minimizing sequence for $I_{c^*}$. Then Theorem \ref{1.2} (1) is proved. For the case $c>c^*$, we take a suitable test function in $\widetilde{S}(c)$ to show $I_c=-\infty$, which is also used in \cite{gs} to consider the Gross-Pitaevskii energy functional.

Another aim of this paper is to extend some above results to the following constrained minimization problem related to Schr\"{o}dinger operators:
$$
f_c:=\inf\limits_{u\in S(c)}F(u),
$$
where
\begin{equation}\label{1.10}
F(u)=\frac{1}{2}\ds\int_{\R^N}|\nabla u|^2-\frac{N}{2N+4}\ds\int_{\R^N}|u|^{\frac{2N+4}{N}}\end{equation}
and $N\geq1$.
\begin{theorem}\label{th1.4}~~For $Q$ and $c^*$ given in \eqref{1.11} and Theorem \ref{th1.1} respectively.

$(1)$~~$f_c=0$ if $c\in(0,c^*]$ and $f_c=-\infty$ if $c>c^*$.

$(2)$~~For all $c>0$ and $c\neq c^*$, $f_c$ has no minimizer.

(3)~~For $0<c<c^*$, $F(u)$ has no critical point constrained on $S(c)$.

(4)~~Set $$A=\{u\in S(c^*)|~u~\hbox{is~a~critical~point~of}~F\}.$$
Then $Q\in A$ and $A=\{(\sqrt{-\lambda_{c^*}})^{\frac{N}{2}}Q(\sqrt{-\lambda_{c^*}}x)|~\lambda_{c^*}<0\}$.

(5)~~For $c>c^*$, if $u_c$ is a critical point of $F$ constrained on $S(c)$, then $u_c$ is signchanging.
\end{theorem}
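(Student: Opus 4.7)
The unifying ingredients are the Gagliardo-Nirenberg inequality \eqref{1.6}, the scaling $u_t(x) := t^{N/2} u_0(tx)$ which preserves the $L^2$ mass and gives $F(u_t) = t^2 F(u_0)$, and a Pohozaev-type identity. For part (1), \eqref{1.6} yields, on $S(c)$, the lower bound
$$F(u) \geq \tfrac{1}{2}\bigl[1 - (c/c^*)^{2/N}\bigr] |\nabla u|_2^2,$$
so $f_c \geq 0$ whenever $c \leq c^*$; letting $t \to 0^+$ in $F(u_t) = t^2 F(u_0)$ for any fixed $u_0 \in S(c)$ gives the matching upper bound $f_c \leq 0$, so $f_c = 0$ for $c \in (0,c^*]$. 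For $c > c^*$, the function $Q_c(x) := \sqrt{c/c^*}\,Q(x) \in S(c)$ satisfies $F(Q_c) < 0$ by a direct calculation using \eqref{1.13}, and sending $t \to \infty$ in $F((Q_c)_t) = t^2 F(Q_c)$ gives $f_c = -\infty$.

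Part (2) drops out of the same estimates. For $c \in (0, c^*)$, a minimizer $u$ would satisfy $F(u) = 0$, which, since $1 - (c/c^*)^{2/N} > 0$, forces $|\nabla u|_2 = 0$ and hence $u \equiv 0$, incompatible with $|u|_2^2 = c > 0$; for $c > c^*$, $f_c = -\infty$ rules out minimizers trivially. For part (3), a constrained critical point $u$ satisfies $-\Delta u = \lambda u + |u|^{4/N} u$. Testing with $u$ and combining with the Pohozaev identity $\frac{N-2}{2}|\nabla u|_2^2 = \frac{N\lambda}{2} c + \frac{N^2}{2N+4} |u|^{\frac{2N+4}{N}}_{\frac{2N+4}{N}}$ eliminates the term $\lambda c$ and produces the scale-invariant identities
$$|u|^{\frac{2N+4}{N}}_{\frac{2N+4}{N}} = \tfrac{N+2}{N} |\nabla u|_2^2, \qquad \lambda c = -\tfrac{2}{N} |\nabla u|_2^2.$$
Feeding the first into \eqref{1.6} gives $(c/c^*)^{2/N} \geq 1$, so a nontrivial critical point requires $c \geq c^*$, proving (3).

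For part (4), the inclusion $\supseteq$ is a direct computation: for each $\lambda_{c^*} < 0$, the function $w(x) := (-\lambda_{c^*})^{N/4} Q(\sqrt{-\lambda_{c^*}}\, x)$ lies in $S(c^*)$ and satisfies $-\Delta w - |w|^{4/N} w = \lambda_{c^*} w$; choosing $\lambda_{c^*} = -1$ recovers $Q$ itself. For the reverse inclusion, let $u \in A$ with multiplier $\lambda_{c^*}$. The second identity of part (3) gives $\lambda_{c^*} < 0$ (the case $\lambda_{c^*} = 0$ would force $u \equiv 0$), while the first identity together with $|u|_2^2 = c^* = |Q|_2^2$ is precisely the equality case of \eqref{1.6}. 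By Weinstein's classification of the extremizers of the $L^2$-critical Gagliardo-Nirenberg inequality, $u(x) = \alpha Q(\beta(x - x_0))$ for some $\alpha, \beta, x_0$; the constraints $\alpha^2 \beta^{-N} = 1$ (from $|u|_2^2 = c^*$) and $\beta^2 = -\lambda_{c^*}$ (from rescaling the PDE to match the equation for $Q$) then pin down $\alpha = \pm \beta^{N/2}$, producing the stated form up to sign and translation.

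For part (5), suppose for contradiction that $u_c$ is a one-signed critical point on $S(c)$ with $c > c^*$; replacing $u_c$ by $-u_c$ if necessary, assume $u_c \geq 0$. Part (3) gives $\lambda_c < 0$, and the rescaling $v(x) := (-\lambda_c)^{-N/4}\, u_c(x/\sqrt{-\lambda_c})$ converts the Euler-Lagrange equation into $-\Delta v + v = v^{(N+4)/N}$ with $v \geq 0$ and $|v|_2^2 = |u_c|_2^2 = c$. The strong maximum principle promotes $v \geq 0$ to $v > 0$, and the Kwong/Gidas-Ni-Nirenberg uniqueness of positive $H^1$ solutions to \eqref{1.11} forces $v$ to be a translate of $Q$, yielding $|v|_2^2 = c^*$ and contradicting $c > c^*$. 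The main obstacle in the whole argument is part (4): invoking the (nontrivial) classification of Gagliardo-Nirenberg extremizers and carefully tracking the sign and translation ambiguities is more delicate than the scaling and Pohozaev manipulations used elsewhere.
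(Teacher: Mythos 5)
Your proposal is correct, and for parts (1), (2), (3) and (5) it is essentially the paper's own argument: the lower bound from \eqref{1.6}, the mass-preserving scaling $u_t(x)=t^{N/2}u(tx)$ with $F(u_t)=t^2F(u)$ sent to $0^+$ or $+\infty$, the Pohozaev/Nehari combination yielding $\int|u|^{\frac{2N+4}{N}}=\frac{N+2}{N}|\nabla u|_2^2=-\frac{N+2}{2}\lambda c$, and the rescaling-plus-uniqueness contradiction for a signed critical point when $c>c^*$. The only genuine divergence is in the reverse inclusion of part (4): the paper rescales a critical point $u_{c^*}$ to a nontrivial solution $w_{c^*}$ of \eqref{6.1}, computes $J(w_{c^*})=\frac{|w_{c^*}|_2^2}{2}=\frac{c^*}{2}$, and identifies $w_{c^*}$ with $Q$ as a least energy solution, whereas you observe that the identity $\int|u|^{\frac{2N+4}{N}}=\frac{N+2}{N}|\nabla u|_2^2$ together with $|u|_2^2=c^*$ is exactly the equality case of \eqref{1.6} and invoke Weinstein's classification of the extremizers. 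Both routes rest on the same underlying uniqueness input from \cite{gnn,k,w}; the paper's version stays inside the least-energy framework it has already set up, while yours avoids discussing energy levels and, as a bonus, makes explicit the sign and translation ambiguity (the set $A$ as written in the theorem is really only a description up to $\pm$ and translations, a point the paper's proof passes over with the phrase ``up to translations''). No gaps; your version is, if anything, slightly more careful on that last point.
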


\begin{remark}\label{rem1.6}
If we replace $\frac{2N+4}{N}$ by a general $p$ in \eqref{1.10}, the case $p<\frac{2N+4}{N}$ and $p>\frac{2N+4}{N}$ have been well studied see e.g. \cite{stuart} and \cite{jean} respectively. Theorem \ref{th1.4} fills the gap.
\end{remark}

In order to obtain minimizers, the case with a potential term similar to \eqref{1.7} was considered in \cite{gs}. In this paper, we try to add a nonpositive perturbation term to the right-hand side of \eqref{1.10}, i.e. we consider the following functional:
\begin{equation}\label{1.12}
F_\mu(u)=\frac{1}{2}\ds\int_{\R^N}|\nabla u|^2-\frac{\mu}{2}\ds\int_{\R^N}V(x)u^2-\frac{N}{2N+4}\ds\int_{\R^N}|u|^{\frac{2N+4}{N}},
\end{equation}
where $\mu>0$ and $V(x)$ satisfies
$$V(x)\in L^{\infty}_{loc}(\R^N),~~~~V(x)\geq0~~~\hbox{and}~~~\lim\limits_{|x|\rightarrow+\infty}V(x)=0.\eqno{(V_2)}$$
We consider the minimization problem
\begin{equation}\label{1.14}
f_\mu(c)=\inf\limits_{u\in S(c)}F_\mu(u).
\end{equation}
Recall in \cite{sw} that the following minimum problem:
\begin{equation}\label{1.15}
\mu_1:=\inf\left\{\ds\int_{\Omega}|\nabla u|^2|~u\in H_0^1(\Omega),\ds\int_{\Omega}V(x)u^2=1\right\}
\end{equation}
is achieved by some $\phi\in H_0^1(\Omega)$ with $\int_{\Omega}V(x)\phi^2=1$ and $\phi>0$ a.e. in $\Omega$, where $\Omega\subset\R^N$ is a bounded domain with smooth boundary and $V(x)\not\equiv0$ a.e. in $\Omega$.
\begin{theorem}\label{th1.5}
Suppose that $V(x)$ satisfies $(V_2)$ and $\mu>0$, then

$(1)$~~$f_\mu(c)=-\infty$ for $c>c^*$;

$(2)$~~if $\mu\geq\mu_1$, $f_\mu(c)\in(-\infty,0)$ for all $c\in(0,c^*]$, moreover, for each $0<c<c^*$, there exists a couple $(u_c,\lambda_c)\in S(c)\times\R^-$ satisfying the following equation:
$$-\Delta u-\mu V(x)u-|u|^{\frac4N}u=\lambda_c u,~~x\in\R^N$$
with $F_\mu(u_c)=f_\mu(c)$.
\end{theorem}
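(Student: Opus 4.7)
My plan is to first establish the sign and finiteness of $f_\mu(c)$ via explicit test functions, and then to obtain compactness of minimizing sequences through a strict subadditivity argument.

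For part (1), I would test against the scaled ground state $w_t(x)=t^{N/2}\sqrt{c/c^*}\,Q(tx)\in S(c)$. Using the identities \eqref{1.13}, a direct computation gives
\[
F_\mu(w_t)=\frac{t^2}{2}\Big[\frac{c}{c^*}-\Big(\frac{c}{c^*}\Big)^{\!(N+2)/N}\Big]\ds\int_{\N}|\nabla Q|^2-\frac{\mu}{2}\ds\int_{\N}V(x)\,w_t^2.
\]
The bracket is strictly negative when $c>c^*$, and $V\ge 0$, so $F_\mu(w_t)\to-\infty$ as $t\to\infty$, giving $f_\mu(c)=-\infty$. For part (2), since $(V_2)$ forces $V\in L^\infty(\N)$, the Gagliardo-Nirenberg inequality \eqref{1.6} yields on $S(c)$ with $c\in(0,c^*]$
\[
F_\mu(u)\ge \tfrac12\big[1-(c/c^*)^{2/N}\big]\ds\int_{\N}|\nabla u|^2-\tfrac{\mu\,|V|_\infty c}{2}\ge-\tfrac{\mu\,|V|_\infty c}{2},
\]
so $f_\mu(c)>-\infty$. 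For the upper bound $f_\mu(c)<0$, I test against $\phi_a=\sqrt{c/|\phi|_2^2}\,\phi$, where $\phi$ is the minimizer in \eqref{1.15} extended by zero outside $\Omega$; using $\int|\nabla\phi|^2=\mu_1$, $\int V\phi^2=1$, and $\mu\ge\mu_1$,
\[
F_\mu(\phi_a)=\tfrac{a^2}{2}(\mu_1-\mu)-\tfrac{N\,a^{(2N+4)/N}}{2N+4}\ds\int_{\N}\phi^{(2N+4)/N}<0.
\]

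For existence at $c\in(0,c^*)$, the G-N bound forces every minimizing sequence $\{u_n\}\subset S(c)$ to be bounded in $H^1(\N)$; I pass to $u_n\rightharpoonup u$ in $H^1$, with a.e.\ and $L^q_{\rm loc}$ convergence. A cut-off argument exploiting $V\to 0$ at infinity and local compactness yields $\int V u_n^2\to\int Vu^2$. Applying Brezis-Lieb to $|u_n|_2^2$ and $\int|u_n|^{(2N+4)/N}$, writing $v_n=u_n-u$ and $\alpha=|u|_2^2$,
\[
F_\mu(u_n)=F_\mu(u)+F(v_n)+o(1),\qquad |v_n|_2^2=c-\alpha+o(1),
\]
with $F$ the functional \eqref{1.10}. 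The technical linchpin is the strict monotonicity $f_\mu(c_2)<f_\mu(c_1)$ for $0<c_1<c_2\le c^*$, obtained by rescaling: for $u\in S(c_1)$, set $\tilde u=\sqrt{c_2/c_1}\,u\in S(c_2)$, so
\[
F_\mu(\tilde u)=\tfrac{c_2}{c_1}F_\mu(u)-\tfrac{N}{2N+4}\,\tfrac{c_2}{c_1}\big[(c_2/c_1)^{2/N}-1\big]\ds\int_{\N}|u|^{(2N+4)/N};
\]
choosing $u$ with $F_\mu(u)$ close to $f_\mu(c_1)<0$ and using $c_2/c_1>1$ gives $f_\mu(c_2)\le (c_2/c_1)f_\mu(c_1)<f_\mu(c_1)$.

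With this monotonicity and $f_c\equiv 0$ on $(0,c^*]$ from Theorem~\ref{th1.4}(1), I exclude the bad alternatives for $\alpha$: if $\alpha=0$, the G-N inequality forces $F(v_n)\ge 0$ eventually (since $|v_n|_2^2\to c<c^*$), whence $f_\mu(c)\ge 0$, contradicting $f_\mu(c)<0$; if $\alpha\in(0,c)$, again $F(v_n)\ge 0$ eventually, so $f_\mu(c)\ge F_\mu(u)\ge f_\mu(\alpha)>f_\mu(c)$, a contradiction. Hence $\alpha=c$, so $v_n\to 0$ in $L^2$; interpolating with the $H^1$-bound yields $v_n\to 0$ in $L^{(2N+4)/N}$, and the splitting then forces $\int|\nabla v_n|^2\to 0$, so $u_n\to u$ strongly in $H^1$ with $F_\mu(u)=f_\mu(c)$. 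Pairing the Euler-Lagrange equation $-\Delta u-\mu V u-|u|^{4/N}u=\lambda_c u$ with $u$ and using $F_\mu(u)=f_\mu(c)$ yields
\[
\lambda_c c=2f_\mu(c)-\tfrac{2}{N+2}\ds\int_{\N}|u|^{(2N+4)/N}<0,
\]
so $\lambda_c<0$. I expect the principal obstacle to be precluding mass-escape to infinity (where the potential vanishes); the strict subadditivity derived via the rescaling $\tilde u=\sqrt{c_2/c_1}\,u$ is precisely what rules out both vanishing and nontrivial splitting of the weak limit.
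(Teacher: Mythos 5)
Your proposal is correct, and it reaches the crucial compactness step by a genuinely different key lemma than the paper. The paper's route is to prove the strict subadditivity $f_\mu(c)<f_\mu(\alpha)+f_\mu(c-\alpha)$ (Lemma \ref{lem4.3}); this requires first establishing a uniform lower bound $\int_{\R^N}|u_n|^{\frac{2N+4}{N}}\geq k_1>0$ along minimizing sequences (itself a small vanishing-type argument using $V\to 0$ at infinity), and it also invokes the continuity of $c\mapsto f_\mu(c)$ (Lemma \ref{lem4.2}) to convert the Brezis--Lieb splitting into $f_\mu(c)\geq f_\mu(\alpha)+f_\mu(c-\alpha)$. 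You instead prove only the strict monotonicity $f_\mu(c_2)<f_\mu(c_1)$ for $c_1<c_2$, which comes for free from $f_\mu(c_2)\leq (c_2/c_1)f_\mu(c_1)$ together with $f_\mu(c_1)<0$ --- no lower bound on the $L^{\frac{2N+4}{N}}$-norm is needed --- and you dispose of dichotomy by observing that the escaping piece $v_n=u_n-u$ carries subcritical mass, so $F(v_n)\geq 0$ eventually by Gagliardo--Nirenberg, whence $f_\mu(c)\geq F_\mu(u)\geq f_\mu(\alpha)>f_\mu(c)$, a contradiction. Your route is slightly more economical, bypassing both Lemma \ref{lem4.2} and the $k_1$ estimate, at the cost of exploiting the specific sign information that the limiting (potential-free) functional is nonnegative on subcritical spheres (Theorem \ref{th1.4}(1)); the paper's subadditivity template is the more standard concentration-compactness argument and would survive perturbations for which the functional at infinity is not signed. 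Everything else --- the test functions showing $f_\mu(c)=-\infty$ for $c>c^*$ and $f_\mu(c)<0$ via the eigenfunction $\phi$ of \eqref{1.15}, the exclusion of vanishing of the weak limit via the decay of $V$, and the computation $\lambda_c c=2f_\mu(c)-\frac{2}{N+2}\int_{\R^N}|u_c|^{\frac{2N+4}{N}}<0$ --- coincides with the paper's proof.
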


The proof of Theorem \ref{th1.4} is similar to that of Theorem \ref{th1.1}, in which \eqref{1.13} and the inequality \eqref{1.6} play important roles.\\

To prove Theorem \ref{th1.5}, since $F_\mu(u)$ is bounded from below and coercive on $S(c)$ for each $c\in(0,c^*)$, the main difficulty is to deal with a possible lack of compactness for minimizing sequences of $f_{\mu}(c).$ We try to use the concentration-compactness principle to check the compactness. To do so, a necessary step is to show that $f_{\mu}(c)<0,$ which can be proved by using the minimizer for \eqref{1.15} and restricting the range of $\mu$. Then we succeeded in excluding the vanishing case by using the decay property of $V(x)$ at infinity. To avoid the dichotomy case, we need to obtain a strong version of subadditivity inequality
\begin{equation}\label{1.16}
 f_{\mu}(c)<f_{\mu}(\alpha)+f_{\mu}(c-\alpha),~~~~\forall~0<\alpha<c<c^*.
  \end{equation}
 The scaling argument used in \cite{bs2,bs1,jl} to get \eqref{1.16} cannot be applied here since $F_{\mu}(u)$ is no more an autonomous functional. To overcome this difficulty, we note that for $u\in S_c$ and $\theta>1$ the only scaling: $u_\theta:=\theta u$ can be used in our case. By using $f_\mu(c)<0$ and such a scaling, we finally prove that \eqref{1.16} holds, then Theorem \ref{th1.5} is proved.

Throughout this paper, we use standard notations. For simplicity, we
write $\int_{\Omega} h$ to mean the Lebesgue integral of $h(x)$ over
a domain $\Omega\subset\R^N$. $L^{p}:= L^{p}(\R^{N})~(1\leq
p<+\infty)$ is the usual Lebesgue space with the standard norm
$|\cdot|_{p}.$ We use `` $\rightarrow"$ and `` $\rightharpoonup"$ to denote the
strong and weak convergence in the related function space
respectively. $C$ will
denote a positive constant unless specified. We use `` $:="$ to denote definitions. $B_r(x):=\{y\in\R^N|~|y-x|<r\}$. We denote a subsequence
of a sequence $\{u_n\}$ as $\{u_n\}$ to simplify the notation unless
specified.

The paper is organized as follows. In $\S$ 2, we prove Theorem \ref{th1.1}. In $\S$ 3, we prove our main result Theorem \ref{th1.2} and Proposition \ref{prop1.3}. In $\S$ 4, we prove Theorem \ref{th1.4} and Theorem \ref{th1.5}.

\section{Proof for Theorem \ref{th1.1}}
For simplicity, we denote
\begin{equation}\label{2.11}
A(u):=\frac 12\ds\int_{\R^3}|\nabla u|^2,~~B(u):=\frac 14\ds\int_{\R^3}\phi_uu^2,~~C(u):=\frac{3}{10}\ds\int_{\R^3}|u|^{\frac{10}{3}}.
\end{equation}
Then for any $c>0$ and $u\in S(c)$, by \eqref{1.6}, we have
\begin{equation}\label{2.1}
C(u)\leq \left(\frac{c}{c^*}\right)^{\frac{2}{3}}A(u).
\end{equation}

\noindent $\textbf{Proof of Theorem \ref{th1.1}}$\,\,\
\begin{proof}~~The proof consists of four steps.\\

\noindent \textbf{Step~1.}~~$e_c=0$ for all $c\in(0,c^*]$.

By \eqref{2.1}, for each $c\in(0,c^*]$ and $u\in S(c)$, we see that
$$E(u)=A(u)+B(u)-C(u)\geq
\left(1-\left(\frac{c}{c^*}\right)^{\frac{2}{3}}\right)A(u)+B(u)\geq0.$$
Hence $e_c=\inf\limits_{S(c)}E(u)\geq0$.

On the other hand, set $u^t(x):=t^{\frac{3}{2}}u(tx)$ with $t>0$, then $u^t\in S(c)$ and $E(u^t)=t^2A(u)+tB(u)-t^2C(u)\rightarrow 0$ as $t\rightarrow 0^+$. Hence $e_c\leq 0$. So $e_c=0$ for all $c\in(0,c^*]$.\\

\noindent\textbf{Step~2}.~~For all $c>c^*$, $e_c=-\infty$ and then there is no minimizer for \eqref{1.4}.

For any $c>c^*$ and $t>0$, let $Q^t(x):=t^{\frac{3}{2}}\sqrt{\frac{c}{c^*}}Q(t x),$ where $Q$ is given in \eqref{1.11}. Then $Q^t\in S(c)$ and by \eqref{1.13}, we see that
$$
e_c\leq E(Q^t)
=\ds t^2\frac{c A(Q)}{c^*}\left[1-\left(\frac{c}{c^*}\right)^{\frac{2}{3}}\right]
+\ds t\left(\frac{c}{c^*}\right)^2B(Q)\rightarrow-\infty~~~\hbox{as}~~t\rightarrow+\infty
$$
since $c>c^*$. So $e_c=-\infty$ for all $c>c^*.$\\

\noindent\textbf{Step~3}.~~$e_c$ has no minimizer for $c\in(0,c^*]$.

By contradiction, we just suppose that there exists $c_0\in(0,c^*]$ such that $e_{c_0}$ has a minimizer $u_0\in S(c_0)$, i.e. $E(u_0)=e_{c_0}=0.$
Then by \eqref{2.1}, we have
$$A(u_0)+B(u_0)=C(u_0)\leq \left(\frac{c}{c^*}\right)^{\frac{2}{3}}A(u_0)\leq A(u_0),$$
which implies that $B(u_0)=0$. It is a contradiction. Therefore \eqref{1.4} admits no minimizer for any $c\in(0,c^*]$.\\

\noindent\textbf{Step~4}.~~For any $c\in(0,c^*]$, there is no critical point of $E(u)$ constrained on $S(c)$.

By contradiction, if for some $c\in(0,c^*]$, $E|_{S(c)}$ has a critical point $u_c$, i.e. $u_c\in S(c)$ and $(E|_{S(c)})'(u_c)=0$, then there is a
Lagrange multiplier $\lambda_c\in\R$ such that
\begin{equation}\label{2.12}
E'(u_c)-\lambda_cu_c=0~~~~\hbox{in}~~H^{-1}(\R^3).
\end{equation}
Then $u_c$ satisfies the following Pohozaev identity (see \cite{r1}):
\begin{equation}\label{2.13}
A(u_c)+5B(u_c)=3C(u_c)+\frac{3}{2}\lambda_cc.
\end{equation}
Hence by \eqref{2.1}-\eqref{2.13}, we see that
$$A(u_c)+\frac{1}{2}B(u_c)=C(u_c)\leq \left(\frac{c}{c^*}\right)^{\frac{2}{3}}A(u_c)\leq A(u_c),$$
which is impossible. Therefore the theorem is proved.
\end{proof}

\section{Proof of Theorem \ref{th1.2} and Proposition \ref{prop1.3}}
In this section, we consider the minimization problem \eqref{1.8}. We need the following compactness result, see e.g. \cite{bw}.

\begin{lemma}\label{lem3.1}~~
Suppose that $V(x)\in L^{\infty}_{loc}(\R^3)$ with $\lim\limits_{|x|\rightarrow +\infty}V(x)=+\infty$. Then the embedding $\mathcal{H}\hookrightarrow L^q(\R^3),~2\leq q<6$ is compact.
\end{lemma}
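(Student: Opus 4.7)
The plan is to argue by the standard weak-compactness plus tightness strategy. Let $\{u_n\} \subset \mathcal{H}$ be a bounded sequence, say $\|u_n\|_{\mathcal{H}} \leq M$. Since $\mathcal{H}$ is continuously embedded in $H^1(\mathbb{R}^3)$, I can extract a subsequence and a limit $u \in \mathcal{H}$ such that $u_n \rightharpoonup u$ weakly in $\mathcal{H}$ and in $H^1(\mathbb{R}^3)$. By the classical Rellich--Kondrachov theorem applied on each ball $B_R(0)$, $u_n \to u$ strongly in $L^q(B_R(0))$ for every $R > 0$ and every $q \in [2,6)$. The whole game is then to upgrade this local strong convergence to global strong convergence on $\mathbb{R}^3$.

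The central step, and the one point where the hypothesis on $V$ is actually used, is to establish tightness of $\{u_n^2\}$ at infinity. Here I would exploit the coercivity $V(x) \to +\infty$ as $|x| \to +\infty$: given any $\eta > 0$, pick $R_\eta$ so large that $V(x) \geq 1/\eta$ whenever $|x| \geq R_\eta$. Then
\[
\int_{|x| \geq R_\eta} u_n^2 \, dx \;\leq\; \eta \int_{|x| \geq R_\eta} V(x)\, u_n^2 \, dx \;\leq\; \eta\, M^2 ,
\]
uniformly in $n$. Combining this uniform decay at infinity with the strong local convergence from Rellich gives $u_n \to u$ in $L^2(\mathbb{R}^3)$: split $\int_{\mathbb{R}^3} |u_n - u|^2$ into the integral over $B_{R_\eta}$, which tends to $0$ as $n \to \infty$, and the integral over its complement, which is bounded by $4\eta M^2$ by the estimate above applied to $u_n$ and (via Fatou) to $u$.

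To extend from $q = 2$ to the full range $2 \leq q < 6$, I would interpolate between $L^2$ and $L^6$: by Sobolev's embedding, $\{u_n\}$ is bounded in $L^6(\mathbb{R}^3)$, and for $q \in (2,6)$ one can write $q = 2\theta + 6(1-\theta)$ with $\theta \in (0,1)$, yielding
\[
|u_n - u|_q \;\leq\; |u_n - u|_2^{\theta}\, |u_n - u|_6^{1-\theta} \;\longrightarrow\; 0 .
\]
Hence $u_n \to u$ strongly in $L^q(\mathbb{R}^3)$ for every $2 \leq q < 6$, which is exactly the compactness claimed.

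The only genuinely delicate point in the argument is the tightness estimate, because it is the single place where the hypothesis $V(x) \to +\infty$ is invoked, and without it the other ingredients (weak compactness, Rellich, Sobolev interpolation) would yield only local strong convergence. Everything else is routine once the uniform tail estimate is in hand.
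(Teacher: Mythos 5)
Your proof is correct, and it is essentially the only proof available: the paper itself does not prove Lemma 3.1 but simply cites Bartsch--Wang \cite{bw}, and your argument (weak convergence, Rellich on balls, the uniform tail estimate $\int_{|x|\ge R_\eta}u_n^2\le\eta\int V u_n^2\le\eta M^2$ from the coercivity of $V$, then interpolation with the bounded $L^6$ norm) is exactly the standard one found there. Two cosmetic remarks: the tail estimate implicitly uses $V\ge 0$ (so that $\int_{|x|\ge R_\eta}Vu_n^2\le\int_{\R^3}Vu_n^2$), which holds under the paper's hypothesis $(V_1)$ and should be said; and your interpolation relation is slightly off --- with $|u|_q\le|u|_2^{\theta}|u|_6^{1-\theta}$ the exponent $\theta$ is determined by $\frac1q=\frac{\theta}{2}+\frac{1-\theta}{6}$, not by $q=2\theta+6(1-\theta)$ (the latter convention would instead give $|u|_q^q\le|u|_2^{2\theta}|u|_6^{6(1-\theta)}$). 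Either form yields the conclusion, so this does not affect the validity of the argument.
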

\begin{lemma}\label{lem3.2} (\cite{wi}, Vanishing lemma)~~Let $r>0$ and $2\leq q<2^*$. If $\{u_n\}$ is bounded in
$H^1(\R^N)$ and
$$\sup\limits_{y\in\R^N}\ds\int_{B_r(y)}|u_n|^q\rightarrow0,~~n\rightarrow+\infty,$$
then $u_n\rightarrow0$ in $L^s(\R^N)$ for $2<s<2^*$.
\end{lemma}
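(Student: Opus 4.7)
The plan is to reduce the vanishing conclusion to a single interpolation identity on balls of radius $r$, then glue via a locally finite covering of $\R^N$ with bounded overlap, and finally propagate convergence to every exponent in $(2,2^*)$ by interpolation against the fixed endpoints $L^2$ and $L^{2^*}$.

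First I would fix a locally finite covering $\{B_r(y_j)\}_{j\ge 1}$ of $\R^N$ with bounded overlap, i.e.\ an integer $K=K(N)$ such that every $x\in\R^N$ lies in at most $K$ of the balls $B_r(y_j)$; such a covering is obtained for example by taking $\{y_j\}$ to be the lattice $(r/\sqrt{N})\Z^N$. On any such ball $B=B_r(y)$ the Sobolev embedding provides a constant $C$ independent of $y$ with
\[
|u|_{L^{2^*}(B)}\le C\,\|u\|_{H^1(B)},\qquad \|u\|_{H^1(B)}^2:=\ds\int_B(|\nabla u|^2+u^2).
\]

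Next I single out the exponent
\[
s_0:=2+\frac{q(2^*-2)}{2^*}\in(q,2^*),
\]
chosen so that, when one interpolates between $L^q(B)$ and $L^{2^*}(B)$ via $\frac{1}{s_0}=\frac{1-\lambda}{q}+\frac{\lambda}{2^*}$, the resulting $\lambda$ satisfies $s_0\lambda=2$. With this calibration, the standard interpolation inequality gives, on each ball $B_r(y_j)$,
\[
\ds\int_{B_r(y_j)}|u_n|^{s_0}\le\Bigl(\ds\int_{B_r(y_j)}|u_n|^q\Bigr)^{\!(s_0-2)/q}|u_n|_{L^{2^*}(B_r(y_j))}^{\,2}\le C^2\Bigl(\ds\int_{B_r(y_j)}|u_n|^q\Bigr)^{\!(s_0-2)/q}\|u_n\|_{H^1(B_r(y_j))}^{2}.
\]
Summing over $j$ and using the bounded overlap of the covering,
\[
\ds\int_{\R^N}|u_n|^{s_0}\le C^2\,K\Bigl(\sup_{y\in\R^N}\ds\int_{B_r(y)}|u_n|^q\Bigr)^{(s_0-2)/q}\|u_n\|_{H^1(\R^N)}^{2}.
\]
The hypothesis forces the supremum factor to tend to zero while the global $H^1$-norm stays bounded, hence $u_n\to 0$ in $L^{s_0}(\R^N)$.

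To finish, I would promote this to every $s\in(2,2^*)$ by one further interpolation against the fixed endpoints. If $s_0<s<2^*$, then $|u_n|_s\le|u_n|_{s_0}^{\alpha}|u_n|_{2^*}^{1-\alpha}$ for some $\alpha\in(0,1)$; since $|u_n|_{2^*}$ is bounded by Sobolev and $|u_n|_{s_0}\to 0$, we obtain $|u_n|_s\to 0$. If $2<s<s_0$, instead $|u_n|_s\le|u_n|_{2}^{\beta}|u_n|_{s_0}^{1-\beta}$ for some $\beta\in(0,1)$, and boundedness in $L^2$ together with convergence in $L^{s_0}$ again yields $|u_n|_s\to 0$. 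The only truly delicate step is the simultaneous calibration of $\lambda$ and $s_0$ so that exactly one copy of $\|u_n\|_{H^1(B)}^2$ survives on each ball; this is what makes the sum over the covering telescope into the global $H^1$-norm of $u_n$ rather than blow up, after which the extension to the full range $(2,2^*)$ is routine.
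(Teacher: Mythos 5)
Your proof is correct and is essentially the classical argument of Willem (Lemma 1.21), which the paper cites here without reproducing: interpolate between $L^q$ and $L^{2^*}$ on balls of radius $r$ with the exponent calibrated so that exactly the power $2$ of the local $H^1$-norm survives, sum over a bounded-overlap covering, and then pass to all $s\in(2,2^*)$ by interpolating against the bounded $L^2$ and $L^{2^*}$ norms. The only caveat is that for $N\le 2$ (where $2^*=\infty$) one should replace $2^*$ by a sufficiently large finite exponent, but this is immaterial here since the paper applies the lemma only in $\R^3$.
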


\noindent $\textbf{Proof of Theorem \ref{th1.2}}$\,\,\

\begin{proof}~~We complete our proof in four steps.\\

\noindent\textbf{Step~1}.~~For any $c\in(0,c^*)$, $I_c$ has a minimizer and $I_c>0$.

Using the same notations as in \eqref{2.11} and set
\begin{equation}\label{3.1}
D(u):=\frac{1}{2}\ds\int_{\R^3}V(x)u^2.
\end{equation}
By \eqref{2.1}, for any $u\in \widetilde{S}(c),$ since $V(x)\geq0$, we have
\begin{equation}\label{3.2}
I(u)\geq\left(1-\left(\frac{c}{c^*}\right)^{\frac{2}{3}}\right)A(u)
+B(u)+D(u)\geq0.
\end{equation}
Then $I_c\geq0$ for all $c\in(0,c^*]$.

For any $c\in (0,c^*)$, let $\{u_n\}\subset \widetilde{S}(c)$ be a minimizing sequence for $I_c$, then \eqref{3.2} implies that $\{u_n\}$ is bounded in $\mathcal{H}$, hence up to a subsequence, there may exist $u_c\in \mathcal{H}$ such that $u_n\rightharpoonup u_c$ in $\mathcal{H}$. By Lemma \ref{lem3.1}, $u_n\rightarrow u_c$ in $L^q(\R^3),$ $2\leq q<6,$ which implies that $|u_c|_2^2=c$, i.e. $u_c\in \widetilde{S}(c)$. So by the weak lower semicontinuity of the norm in $\mathcal{H}$, we have $I_c\leq I(u_c)\leq\liminf\limits_{n\rightarrow+\infty}I(u_n)=I_c,$ i.e. $u_c$ is a minimizer for $I_c$. So \eqref{1.8} admits at least one minimizer for $c\in(0,c^*)$ and it follows from \eqref{3.2} that $I_c>0$.\\

\noindent\textbf{Step~2}.~~The function $c\mapsto\frac{I_c}{c^2}$ is strictly decreasing on $(0,c^*)$.

For any $0<c_1<c_2<c^*$, by Step 1, there is $u_{1}\in\widetilde{S}(c_1)$ such that $I_{c_1}=I(u_1)>0$. Let $v:=\sqrt{\frac{c_2}{c_1}}u_1,$ then $v\in\widetilde{S}(c_2)$. Consider a new function $f:[1,+\infty)\rightarrow\R$ defined as follows:
$$
f(t)=(1-t)(A(u_1)+D(u_1))+(t-t^{\frac{2}{3}})C(u_1).
$$
Since
\begin{equation}\label{3.26}
f'(t)=-A(u_1)-D(u_1)+\left(1-\frac{2}{3t^{\frac{1}{3}}}\right)C(u_1)~~~~\hbox{and}~~~~
f''(t)=\frac{2}{9t^{\frac43}}C(u_1),
\end{equation}
$f'(t)$ is strictly increasing on $[1,+\infty)$. Then for any $t\geq1$, by \eqref{2.1} we have
$$f'(t)<\lim\limits_{t\rightarrow+\infty}f'(t)=-A(u_1)-D(u_1)+C(u_1)<0,$$
which implies that $f(t)<f(1)=0$ for all $t>1$. Hence
$$
I_{c_2}\leq I(v)=\ds\left(\frac{c_2}{c_1}\right)^2I(u_1)+\ds\frac{c_2}{c_1}f\left(\frac{c_2}{c_1}\right)
<\ds\left(\frac{c_2}{c_1}\right)^2I_{c_1},
$$
i.e. $\frac{I_{c_2}}{c_2^2}< \frac{I_{c_1}}{c_1^2}$, so $\frac{I_{c}}{c^2}$ is strictly decreasing on $(0,c^*)$.\\

\noindent\textbf{Step~3}.~~When $c=c^*$, a minimizer for \eqref{1.8} exists.

Let $c_n=c^*(1-\frac{1}{2n})$, then $c_n\nearrow c^*$. By Step 1, there exists a sequence $\{u_n\}\subset\widetilde{S}(c_n)$ such that $I(u_n)=I_{c_n}$. By Step 2, we see that
$$I_{c_n}<\frac{I_{c_1}}{c^2_1}c^2_n=4I_{\frac{c^*}2}\left(1-\frac{1}{2n}\right)^2\leq4I_{\frac{c^*}2},$$
i.e. $I_{c_n}$ is uniformly bounded. By \eqref{3.2}, we have $D(u_n)\leq I_{c_n}$, i.e. $\{\int_{\R^3}V(x)u_n^2\}$ is uniformly bounded.
It is enough to prove $\{A(u_n)\}$ is uniformly bounded. By contradiction, we just suppose that
\begin{equation}\label{3.7}
A(u_n)\rightarrow+\infty~~~\hbox{as}~~n\rightarrow+\infty.
\end{equation}
By \eqref{2.1} we have
\begin{equation}\label{3.8}
0\leq A(u_n)-C(u_n)\leq I_{c_n}\leq4I_{\frac{c^*}2},
\end{equation}
which implies that
$\lim\limits_{n\rightarrow+\infty}\frac{C(u_n)}{ A(u_n)}=1$, i.e. $\lim\limits_{n\rightarrow+\infty}C(u_n)=+\infty$. Let
\begin{equation}\label{3.9}
\varepsilon_n:=\frac{1}{C(u_n)^{\frac12}},
\end{equation}
then $\varepsilon_n\rightarrow0$ as $n\rightarrow+\infty$. Set
\begin{equation}\label{3.10}
w_n(x):=\varepsilon_n^{\frac32}u_n(\varepsilon_n x).
\end{equation}
Then $|w_n|_2^2=c_n$ and by \eqref{3.8}-\eqref{3.10},
\begin{equation}\label{3.11}
C(w_n)=1,~~~~~~1\leq A(w_n)\leq 1+4I_{\frac{c^*}2}\varepsilon_n^2.
\end{equation}
Denote
$\delta:=\lim\limits_{n\rightarrow+\infty}\sup\limits_{y\in\R^3}\int_{B_1(y)}|w_n|^2.$
If $\delta=0$, then by Lemma \ref{lem3.2}, $w_n\rightarrow 0$ in $L^q(\R^3)$, $\forall~q\in(2,6)$. Hence $C(w_n)\rightarrow0$, which contradicts \eqref{3.11}. Therefore, $\delta>0$. Then there exists a sequence $\{y_n\}\subset\R^3$ such that
 \begin{equation}\label{3.12}
 \ds\int_{B_1(y_n)}|w_n|^2\geq \frac{\delta}{2}>0.
 \end{equation}
 Set
$$
\widetilde{w}_n(x):=w_n(x+y_n)=\varepsilon_n^{\frac32}u_n(\varepsilon_n x+\varepsilon_n y_n).
$$
Then by \eqref{3.11}, $\{\widetilde{w}_n\}$ is a bounded sequence in $H^1(\R^3)$. We may assume that, up to a subsequence, there exists $w_0\in H^1(\R^3)$ such that
\begin{equation}\label{3.14}\widetilde{w}_n\rightharpoonup w_0~~\hbox{in}~H^1(\R^3),~~~~~~~\widetilde{w}_n\rightarrow w_0~~\hbox{in}~L^q_{loc}(\R^3),~q\in[1,6),~~~~w_n(x)\rightarrow w_0(x)~~\hbox{a.e.~in}~\R^3.
 \end{equation}
By \eqref{3.12}, we have $\int_{B_1(0)}|\widetilde{w}_n|^2\geq \frac{\delta}{2}>0$, then  $w_0\not\equiv0$ in $H^1(\R^3)$. By \eqref{3.2}, we have
\begin{equation}\label{3.17}
0\leq B(\widetilde{w}_n)=\varepsilon_n B(u_n)\leq \varepsilon_nI_{c_n}\rightarrow0~~\hbox{as}~n\rightarrow+\infty,
\end{equation}
i.e. $\lim\limits_{n\rightarrow+\infty}B(\widetilde{w}_n)=0$. However, by \eqref{3.14} and the Fatou's Lemma we see that $0<B(w_0)\leq \liminf\limits_{n\rightarrow+\infty}B(\widetilde{w}_n)=0$, which is a contradiction.
Therefore, $\{u_n\}$ is bounded in $\mathcal{H}$.

There exists $u_0\in \mathcal{H}$ such that $u_n\rightharpoonup u_0$ in $\mathcal{H}$. By Lemma \ref{lem3.1}, $u_n\rightarrow u_0$ in $L^2(\R^N)$, so $u_0\in \widetilde{S}(c^*)$ and
\begin{equation}\label{3.19}
I_{c^*}\leq I(u_0)\leq \liminf\limits_{n\rightarrow+\infty}I(u_n)=\liminf\limits_{n\rightarrow+\infty}I_{c_n}.
\end{equation}
On the other hand, for any $u\in \widetilde{S}(c^*)$, $\sqrt{\frac{c_n}{c^*}}u\in \widetilde{S}(c_n)$ and $\sqrt{\frac{c_n}{c^*}}u\rightarrow u$ in $\mathcal{H}.$
Then $\limsup\limits_{n\rightarrow+\infty}I_{c_n}\leq \lim\limits_{n\rightarrow+\infty}I(\sqrt{\frac{c_n}{c^*}}u)=I(u)$. Hence by the arbitrary of $u$, we have $\limsup\limits_{n\rightarrow+\infty}I_{c_n}\leq I_{c^*}$. Therefore, we conclude from \eqref{3.19} that $I(u_0)=I_{c^*},$ i.e. $u_0$ is a minimizer of $I_{c^*}$.\\

\noindent\textbf{Step~4}.~~For any $c>c^*$, $I_c=-\infty$ and there is no minimizer for \eqref{1.8}.

For any $c>c^*$ and $\rho>0$, let $x_0\in\R^3$ and $\psi$ be a radial cut-off function such that $\psi\equiv1$ on $B_1(0)$, $\psi\equiv0$ on $\R^3\backslash B_2(0)$, $0\leq\psi\leq1$ and $|\nabla\psi|\leq2$. Set
$$u^\rho(x)=A_\rho\sqrt{\frac{c}{c^*}}\rho^{\frac{3}{2}}\psi(x-x_0)Q(\rho(x-x_0)),$$
where $A_\rho>0$ is chosen to satisfy that $u^\rho\in \widetilde{S}(c)$. In fact, by the exponential decay \eqref{1.5} of $Q$, we have
\begin{equation}\label{3.25}
~~~~~~~~~\ds\frac{c}{A^2_{\rho}}=c+\ds\frac{c}{c^*}\ds\int_{\R^3}[\psi^2(\frac{x}{\rho})-1]Q^2(x)=c+O(\rho^{-\infty})~~~~\hbox{as}~\rho\rightarrow+\infty,
\end{equation} i.e. $A_\rho$ depends only on $\rho$ and $\lim\limits_{\rho\rightarrow+\infty}A_\rho=1.$ Here $O(\rho^{-\infty})$ denotes a function satisfying $\lim\limits_{\rho\rightarrow+\infty}O(\rho^{-\infty})\rho^s=0$ for all $s>0$. Since $V(x)\psi(x-x_0)$ has compact support, $D(u^\rho)\rightarrow \frac{V(x_0)c}{2}$. Then similarly to the proof of \eqref{3.25}, we have
$$I(u^\rho)=\ds\rho^2\frac{c A(Q)}{c^*}\left[1-\left(\frac{c}{c^*}\right)^{\frac{2}{3}}\right]+\frac{V(x_0)c}{2}
+\ds\rho\left(\frac{c}{c^*}\right)^2B(Q)+O(\rho^{-\infty})\rightarrow-\infty$$
as $\rho\rightarrow+\infty.$ So $e_c=-\infty$.

\end{proof}

\noindent $\textbf{Proof of Proposition \ref{prop1.3}}$\,\,\
\begin{proof}~~
(1)~~Similarly to Step 2 in the proof of Theorem \ref{th1.2}, we can show that the function $c\mapsto\frac{I_c}{c^2}$ is strict decreasing on $(0,c^*]$.

(2)~~Let us first show that
\begin{equation}\label{3.27}
\frac{I_c}{c^2}~\hbox{is~continuous~at~each}~c\in(0,c^*].
\end{equation}
By (1), to prove \eqref{3.27} is equivalent to prove that for any $c\in(0,c^*]$, $\{c_n\}\subset(0,c^*]$ such that $c_n\rightarrow c^-$, it must have
\begin{equation}\label{3.29}
\lim_{c_n\rightarrow c^-}\frac{I_{c_n}}{c_n^2}\leq\frac{I_{c}}{c^2}.\end{equation}

By Theorem \ref{1.2}, there exists $u_c\in \widetilde{S}(c)$ such that $I(u_c)=I_c$, let $v_n:=\sqrt{\frac{c_n}{c}}u_c$, then $v_n\in \widetilde{S}(c_n)$ and
 $$\frac{I_{c_n}}{c_n^2}\leq\frac{I(v_n)}{c_n^2}=\frac{ A(u_c)+D(u_c)}{cc_n}+\frac{ B(u_c)}{c^2}-\frac{C(u_c)}{c^{\frac{5}{3}}c_n^{\frac{1}{3}}}\rightarrow \frac{ I_c}{c^2},$$
 hence \eqref{3.29} holds and then \eqref{3.27} holds, i.e. if $c_n\rightarrow c$, then $\frac{I_{c_n}}{c_n^2}=\frac{I_c}{c^2}+o_n(1)$, where $o_n(1)\rightarrow0$ as $c_n\rightarrow c$, hence $\lim_{c_n\rightarrow c} I_{c_n}=\lim_{c_n\rightarrow c}\left(\frac{I_c}{c^2}c_n^2\right)=I_c.$
So $I_c$ is continuous on $(0,c^*]$.

 (3)~~Let $u_{c^*}\in \widetilde{S}(c^*)$ be a minimizer of $I_{c^*}$ and set $v:=\sqrt{\frac{c}{c^*}}u_{c^*}$ for any $c\in(0,c^*]$, then $v\in \widetilde{S}(c)$ and
$$
 0\leq I_c\leq I(v)=\ds\frac{c}{c^*}(A(u_{c^*})+D(u_{c^*}))+\ds\left(\frac{c}{c^*}\right)^2B(u_{c^*})
 -\left(\frac{c}{c^*}\right)^{\frac{5}{3}}C(u_{c^*})\rightarrow0
$$
 as $c\rightarrow0^+$. So $\lim\limits_{c\rightarrow0^+}I_c=0.$
 \end{proof}

\section{Proof of Theorems \ref{th1.4} and \ref{th1.5}}
Recall in Section 1 that up to translations, $Q$ is the unique positive least energy solution of the following equation
\begin{equation}\label{6.1}
-\Delta u+u=|u|^{\frac{4}{N}}u,~~~x\in\R^N.
\end{equation}
Define $J(u):H^1(\R^N)\rightarrow\R$ as follows:
$$J(u)=\frac{1}2\int_{\R^N}(|\nabla u|^2+u^2)-\frac{N}{2N+4}\int_{\R^N}|u|^{\frac{2N+4}{N}},$$
then by \eqref{1.13}, we see that
$$\inf\{J(u)|~u~\hbox{is~a~nontrivial~solution~of}~\eqref{6.1}\}=J(Q)=\frac{c^*}2.$$

\noindent $\textbf{Proof of Theorem \ref{th1.4}}$\,\,\
\begin{proof}~~The proof of $(1)(2)$ is similar to that of Theorem \ref{1.1}. Let us next prove $(3)-(5)$.

If there exists some $c>0$ such that $F|_{S(c)}$ has a critical point $u_c\in S(c)$, then there is a Lagrange multiple $\lambda_c\in \R$ such that $(u_c,\lambda_c)$ satisfies the following equation
\begin{equation}\label{5.1}
-\Delta u-|u|^{\frac{4}{N}}u=\lambda_cu,~~~~x\in\R^N.
\end{equation}
Moreover, $u_c$ satisfies the Pohozaev identity:
\begin{equation}\label{5.2}
\frac{N-2}{2}\ds\int_{\R^N}|\nabla u_c|^2-\frac{N^2}{2N+4}\ds\int_{\R^N}|u_c|^{\frac{2N+4}{N}}=\frac{N}{2}\lambda_cc.
\end{equation}
So,
\begin{equation}\label{5.3}
\ds\int_{\R^N}|u_c|^{\frac{2N+4}{N}}=\frac{N+2}{N}\ds\int_{\R^N}|\nabla u_c|^2=-\frac{N+2}{2}\lambda_cc,
\end{equation}
which implies that $\lambda_c<0$. Moreover, by \eqref{1.6}, we have $c\geq c^*$.
Therefore, if $c<c^*$, then $F(u)$ has no critical point restricted on $S(c)$.

 If $c=c^*$, then by \eqref{1.13} we see that $f_{c^*}$ is attained by $Q$. Furthermore, if $u_{c^*}$ is a critical point of $F|_{S(c^*)}$, then similarly, there exists $\lambda_{c^*}<0$ such that \eqref{5.1}-\eqref{5.3} hold. Set $u_{c^*}(x)=(\sqrt{-\lambda_{c^*}})^{\frac{N}{2}}w_{c^*}(\sqrt{-\lambda_{c^*}}x)$, then $w_{c^*}$ is a nontrivial solution of \eqref{6.1} and
$J(w_{c^*})=\frac{|w_{c^*}|_2^2}{2}=\frac{c^*}{2},$ i.e. $w_{c^*}$ is a least energy solution of \eqref{6.1}. Then up to translations, $w_{c^*}=Q$.

To show (4), by contradiction, if for some $c>c^*$, the critical point $u_c$ has constant sign. We may assume that $u_c\geq0$. By the strong maximum principle, $u_c>0.$ Then similarly, there exists $\lambda_{c}<0$ such that $w_c(x)=(\sqrt{-\lambda_{c}})^{-\frac{N}{2}}u_{c}(\frac{x}{\sqrt{-\lambda_{c}}})$ is a positive solution of \eqref{6.1}, then $w_c=Q$, so $c=|u_c|_2^2=|w_c|_2^2=|Q|_2^2=c^*$, which is impossible. Then the theorem is proved.
\end{proof}

We next consider the minimization problem \eqref{1.14}:
$$f_\mu(c)=\inf\limits_{u\in S(c)}F_\mu(u),$$
where
$$F_\mu(u)=\frac12\ds\int_{\R^N}|\nabla u|^2-\frac\mu2\ds\int_{\R^N}V(x)u^2-\frac{N}{2N+4}\ds\int_{\R^N}|u|^{\frac{2N+4}{N}}$$
and $\mu>0$ and $V(x)$ satisfies $(V_2)$.
It is easy to see from $(V_2)$ that $V(x)$ is bounded a.e. in $\R^N$, i.e. there exists some $V_0>0$ such that $0\leq V(x)\leq V_0$ a.e. in $\R^N.$

We recalled in Section 1 that for a bounded domain $\Omega\in \R^N$ with smooth boundary and $V(x)\not\equiv0$ a.e. in $\Omega,$ the minimum problem
\begin{equation}\label{4.1}
\mu_1:=\inf\{\ds\int_{\R^N}|\nabla u|^2|~u\in H^1_0(\Omega),\ds\int_{\Omega}V(x)u^2=1\}
\end{equation}
is achieved by $\phi\in H_0^1(\Omega)$ with $\int_{\Omega}V(x)\phi^2=1$ and $\phi>0$ a.e. in $\Omega$.

\begin{lemma}\label{lem4.1}~~Suppose that $\mu>0$ and $V(x)$ satisfies $(V_2)$,

$(1)$~~$F_\mu(u)$ is bounded from below on $S(c)$ for all $c\in (0,c^*]$; $f_\mu(c)=-\infty$ for all $c>c^*$. Moreover, $F_\mu(u)$ is coercive on $S(c)$ for $0<c<c^*$.

$(2)$~~for any $c\in(0,c^*]$, $f_\mu(c)\leq 0$, moreover, $f_\mu(c)<0$ if $\mu\geq\mu_1$.
\end{lemma}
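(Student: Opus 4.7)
My plan is to combine the sharp Gagliardo--Nirenberg inequality \eqref{1.6} with the uniform bound $0 \leq V(x) \leq V_0$ (implied by $(V_2)$) for part $(1)$, and to use two different test-function constructions for part $(2)$: the dilation $u^t(x) = t^{N/2} u(tx)$ with $t \to 0^+$ to push $F_\mu$ down to $0$, and a rescaled multiple of the eigenfunction $\phi$ from \eqref{4.1} to achieve strict negativity when $\mu \geq \mu_1$.

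\emph{Part $(1)$.} For $u \in S(c)$ with $c \in (0, c^*]$, inequality \eqref{1.6} gives
$$\frac{N}{2N+4}\int_{\R^N} |u|^{\frac{2N+4}{N}} \leq \frac{1}{2}\left(\frac{c}{c^*}\right)^{\frac{2}{N}} \int_{\R^N} |\nabla u|^2,$$
and combining with $\frac{\mu}{2} \int V u^2 \leq \frac{\mu V_0 c}{2}$ yields
$$F_\mu(u) \geq \frac{1}{2}\left[1 - \left(\frac{c}{c^*}\right)^{\frac{2}{N}}\right]\int_{\R^N} |\nabla u|^2 - \frac{\mu V_0 c}{2},$$
which is bounded below for $c \leq c^*$ and coercive for $c < c^*$. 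For $c > c^*$, I would mimic Step 2 of the proof of Theorem \ref{th1.1}: testing with $u_t(x) = t^{N/2}\sqrt{c/c^*}\,Q(tx)$ and using \eqref{1.13}, the gradient and nonlinear contributions collapse to $\frac{t^2 c}{2 c^*}[1 - (c/c^*)^{2/N}]\int |\nabla Q|^2 \to -\infty$ as $t \to \infty$, while the potential term is bounded by $\frac{\mu V_0 c}{2}$ and therefore cannot spoil the leading order.

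\emph{Part $(2)$.} For the nonstrict bound $f_\mu(c) \leq 0$, I take any $u \in S(c)$ and set $u^t(x) = t^{N/2}u(tx) \in S(c)$. The kinetic and nonlinear terms each pick up a factor $t^2 \to 0$ as $t \to 0^+$, while the potential term transforms by the substitution $y = tx$ into $\int V(y/t)\,u^2(y)\,dy$; since $V \leq V_0$ and $V(y/t) \to 0$ a.e.\ by $(V_2)$, dominated convergence forces this integral to $0$, so $F_\mu(u^t) \to 0$. For the strict inequality when $\mu \geq \mu_1$, let $\phi \in H_0^1(\Omega)$ achieve \eqref{4.1}, extend it by zero to $\R^N$, and set $\tilde\phi := \sqrt{c}\,\phi/|\phi|_2 \in S(c)$. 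Using $\int V \phi^2 = 1$ and $\int|\nabla\phi|^2 = \mu_1$, a direct calculation yields
$$F_\mu(\tilde\phi) = \frac{c(\mu_1 - \mu)}{2|\phi|_2^2} - \frac{Nc^{\frac{N+2}{N}}}{(2N+4)\,|\phi|_2^{\frac{2N+4}{N}}}\int_\Omega |\phi|^{\frac{2N+4}{N}};$$
the first summand is $\leq 0$ when $\mu \geq \mu_1$ and the second is strictly negative since $\phi > 0$ a.e.\ on $\Omega$, so $f_\mu(c) \leq F_\mu(\tilde\phi) < 0$.

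The only delicate point is the dominated-convergence step in Part $(2)$, where the decay hypothesis in $(V_2)$ (namely $V(x) \to 0$ as $|x|\to\infty$) is essential; all remaining steps are routine bookkeeping with the sharp Gagliardo--Nirenberg inequality \eqref{1.6} together with the identities \eqref{1.13}, so I do not anticipate any serious obstacle.
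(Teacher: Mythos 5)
Your proposal is correct and follows essentially the same route as the paper: the sharp Gagliardo--Nirenberg bound plus $0\leq V\leq V_0$ for the lower bound and coercivity, the rescaled $Q$ for $f_\mu(c)=-\infty$ when $c>c^*$, the dilation $u^t$ with $t\to0^+$ for $f_\mu(c)\leq0$, and the normalized eigenfunction $\phi$ from \eqref{4.1} for strict negativity. The only cosmetic difference is that for $f_\mu(c)\leq 0$ the sign condition $V\geq 0$ already gives $F_\mu(u^t)\leq \frac{t^2}{2}\int|\nabla u|^2\to 0$, so the dominated-convergence step you flag as delicate is not actually needed for that inequality.
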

\begin{proof}~~$(1)$~~For any $c\in(0,c^*]$ and any $u\in S(c)$, by $(V_2)$, we have that
\begin{equation}\label{4.2}
F_\mu(u)\geq\frac{1}{2}\left(1-\left(\frac{c}{c^*}\right)^{\frac{2}{N}}\right)\ds\int_{\R^N}|\nabla u|^2
-\frac{\mu}{2}\ds\int_{\R^N}V(x)u^2\geq -\frac{V_0\mu c}{2},
\end{equation}
then $f_\mu(c)>-\infty$ for all $c\in(0,c^*]$. Moreover, we see from \eqref{4.2} that $F_\mu(u)$ is coercive on $S(c)$ if $0<c<c^*$.

For $c>c^*$, let $Q^t(x):=\sqrt{\frac{c}{c^*}}t^{\frac{N}{2}}Q(tx)$ with $t>0$, where $Q$ is given in \eqref{1.11}, then by $(V_2)$, we have
$$F_\mu(Q^t)=\frac{t^2}{2}\frac{c\int_{\R^N}|\nabla Q|^2}{c^*}\left[1-\left(\frac{c}{c^*}\right)^{\frac{2}{N}}\right]-\frac{\mu c}{2c^*}\ds\int_{\R^N}V(\frac{x}{t})Q^2\rightarrow-\infty~~~\hbox{as}~~t\rightarrow +\infty,$$
which implies that $f_\mu(c)=-\infty$ for each $c>c^*$.

$(2)$~~For $c\in (0,c^*]$ and $u\in S(c)$, set $u^t(x)=t^{\frac{N}{2}}u(tx)$, then $u^t\in S(c)$ and by $(V_2)$, we have
 $$F_\mu(u^t)=\frac{t^2}{2}\ds\int_{\R^N}|\nabla u|^2-\frac{\mu}{2}\ds\int_{\R^N}V(\frac{x}{t})u^2-\frac{Nt^2}{2N+4}\ds\int_{\R^N}|u|^{\frac{2N+4}{N}}\rightarrow 0~~\hbox{as}~t\rightarrow0^+,$$
 hence $f_\mu(c)\leq0$.

If $\mu\geq\mu_1$, set $\phi_c:=\frac{\sqrt{c}\phi}{|\phi|_2}$, where $\phi$ is given in \eqref{4.1}, then $\phi_c\in S(c)$ and by \eqref{4.1}, we see that
$$\begin{array}{ll}
F_\mu(\phi_c)&=\ds\frac{c}{2|\phi|_2^2}\left(\ds\int_{\R^N}|\nabla \phi|^2-\mu\ds\int_{\R^N}V(x)\phi^2\right)-\frac{N}{2N+4}\left(\frac{\sqrt{c}}{|\phi|_2}\right)^{\frac{2N+4}{N}}\ds\int_{\R^N}|\phi|^{\frac{2N+4}{N}}\\[5mm]
&=\ds\frac{c}{2|\phi|_2^2}\left(\ds\int_{\Omega}|\nabla \phi|^2-\mu\ds\int_{\Omega}V(x)\phi^2\right)-\frac{N}{2N+4}\left(\frac{\sqrt{c}}{|\phi|_2}\right)^{\frac{2N+4}{N}}\ds\int_{\Omega}|\phi|^{\frac{2N+4}{N}}\\[5mm]
&=\ds\frac{c}{2|\phi|_2^2}\left(\mu_1-\mu\right)-\frac{N}{2N+4}\left(\frac{\sqrt{c}}{|\phi|_2}\right)^{\frac{2N+4}{N}}\ds\int_{\Omega}|\phi|^{\frac{2N+4}{N}}\\[5mm]
&<0,
\end{array}$$
which shows that $f_\mu(c)<0$ for all $c\in(0,c^*]$ if $\mu\geq\mu_1$.
\end{proof}

\begin{lemma}\label{lem4.2}~~
Suppose that $\mu>0$ and $V(x)$ satisfies $(V_2)$, then the function $c\mapsto f_\mu(c)$ is continuous on $(0,c^*)$.
\end{lemma}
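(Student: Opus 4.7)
\textbf{Proposal for Lemma \ref{lem4.2}.} The plan is to fix $c\in(0,c^*)$ and verify both $\limsup_{n\to\infty} f_\mu(c_n)\le f_\mu(c)$ and $\liminf_{n\to\infty} f_\mu(c_n)\ge f_\mu(c)$ for every sequence $c_n\to c$ with $c_n\in(0,c^*)$. Because the only scaling that preserves the constraint is the linear one $u\mapsto \sqrt{c'/c}\,u\colon S(c)\to S(c')$, the natural strategy is to take a (near-)minimizer on one side, transport it by this scaling to the other side, and control the error. Since $F_\mu$ is no longer autonomous, the additional $V$-term must be controlled uniformly, but this is harmless thanks to $(V_2)$, which gives $V\in L^\infty$.

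\textbf{Upper bound.} Given $\varepsilon>0$, pick $u\in S(c)$ with $F_\mu(u)<f_\mu(c)+\varepsilon$, and set $v_n:=\sqrt{c_n/c}\,u\in S(c_n)$. Then
\[
F_\mu(v_n)=\frac{c_n}{c}\cdot\frac12\ds\int_{\R^N}|\nabla u|^2-\frac{c_n}{c}\cdot\frac{\mu}{2}\ds\int_{\R^N}V(x)u^2-\left(\frac{c_n}{c}\right)^{\!\!\frac{N+2}{N}}\!\frac{N}{2N+4}\ds\int_{\R^N}|u|^{\frac{2N+4}{N}}.
\]
All three integrals are finite (the $V$-integral since $V\in L^\infty$ and $u\in L^2$), and the prefactors tend to $1$, so $F_\mu(v_n)\to F_\mu(u)$. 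Hence $f_\mu(c_n)\le F_\mu(v_n)\to F_\mu(u)<f_\mu(c)+\varepsilon$; sending $\varepsilon\to0^+$ gives the upper bound.

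\textbf{Lower bound.} For each $n$ choose $u_n\in S(c_n)$ with $F_\mu(u_n)<f_\mu(c_n)+1/n$. For $n$ large, $c_n$ lies in a compact interval $[c_1,c_2]\subset(0,c^*)$, so by the coercivity estimate \eqref{4.2} in the proof of Lemma \ref{lem4.1},
\[
F_\mu(u_n)\ge \tfrac12\Bigl(1-(c_n/c^*)^{2/N}\Bigr)\ds\int_{\R^N}|\nabla u_n|^2-\tfrac12 V_0\mu c_n,
\]
where $1-(c_n/c^*)^{2/N}$ has a positive lower bound on $[c_1,c_2]$. Combined with the uniform bound $-V_0\mu c_2/2\le f_\mu(c_n)\le 0$ from Lemma \ref{lem4.1}, this forces $\{\int|\nabla u_n|^2\}$ to be bounded. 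Then $\{u_n\}$ is bounded in $H^1(\R^N)$, and by the Gagliardo--Nirenberg inequality \eqref{1.6} also in $L^{(2N+4)/N}(\R^N)$. Set $w_n:=\sqrt{c/c_n}\,u_n\in S(c)$; an expansion analogous to the upper bound gives
\[
F_\mu(w_n)-F_\mu(u_n)=\Bigl(\frac{c}{c_n}-1\Bigr)\Bigl[\tfrac12\ds\int|\nabla u_n|^2-\tfrac\mu2\ds\int V u_n^2\Bigr]-\tfrac{N}{2N+4}\Bigl[\bigl(c/c_n\bigr)^{\frac{N+2}{N}}-1\Bigr]\ds\int|u_n|^{\frac{2N+4}{N}}.
\]
The prefactors $c/c_n-1$ and $(c/c_n)^{(N+2)/N}-1$ vanish as $n\to\infty$, and the three bracketed integrals are uniformly bounded by the preceding step, so $F_\mu(w_n)=F_\mu(u_n)+o(1)$. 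Therefore $f_\mu(c)\le F_\mu(w_n)\le f_\mu(c_n)+1/n+o(1)$, yielding $f_\mu(c)\le\liminf_{n\to\infty}f_\mu(c_n)$.

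\textbf{Main obstacle.} The delicate point is the uniform $H^1$-boundedness of the near-minimizing sequence $\{u_n\}$ used in the lower bound. This is precisely where the openness $c<c^*$ enters: it ensures that the Gagliardo--Nirenberg coefficient $1-(c_n/c^*)^{2/N}$ stays bounded away from $0$ along $c_n\to c$, so the $L^{(2N+4)/N}$-term cannot absorb the kinetic energy. Without this strict inequality (i.e., if one tried to push continuity up to $c=c^*$), the coercivity of $F_\mu$ on $S(c_n)$ would degenerate and the scaling comparison would break down.
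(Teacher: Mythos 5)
Your proposal is correct and follows essentially the same route as the paper: scale near-minimizers between $S(c_n)$ and $S(c)$ by the mass-preserving factor $\sqrt{c'/c}$, and use the uniform $H^1$-boundedness coming from the coercivity estimate \eqref{4.2} (valid since $c_n$ stays in a compact subinterval of $(0,c^*)$) to show the scaling changes $F_\mu$ only by $o(1)$. Your remark on why $c<c^*$ is essential is a correct reading of where Lemma \ref{lem4.1}(1) enters.
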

\begin{proof}
The proof is similar to that of Theorem 2.1 in \cite{bs2}. For readers' convenience, we give its detailed proof.

It is enough to show that if $c\in(0,c^*)$ and $\{c_n\}\subset(0,c^*)$ such that $c_n\rightarrow c$ as $n\rightarrow+\infty$, then
\begin{equation}\label{4.3}
\lim\limits_{n\rightarrow+\infty}f_\mu(c_n)=f_\mu(c).
\end{equation}
Let $\{u_n\}\subset S(c_n)$ and $\{v_n\}\subset S(c)$ such that
$$F_\mu(u_n)<f_\mu(c_n)+\frac1n$$
and $$F_\mu(v_n)\rightarrow f_\mu(c)~~\hbox{as}~n\rightarrow+\infty,$$
then by Lemma \ref{lem4.1},
$\{u_n\}$ and $\{v_n\}$ are uniformly bounded in $H^1(\R^N)$ respectively. Hence
$$\begin{array}{ll}
f_\mu(c)&\leq F_\mu(\sqrt{\frac{c}{c_n}}u_n)\\[5mm]
&=\ds\frac{c}{c_n}\left(\ds\frac12\ds\int_{\R^N}|\nabla u_n|^2-\frac{\mu}{2}\ds\int_{\R^N}V(x)|u_n|^2\right)
-\frac{N}{2N+4}\left(\frac{c}{c_n}\right)^{\frac{N+2}{N}}\ds\int_{\R^N}|u_n|^{\frac{2N+4}{N}}\\[5mm]
&=F_\mu(u_n)+o_n(1)\leq f_\mu(c_n)+o_n(1),
\end{array}$$
where $o_n(1)\rightarrow 0$ as $n\rightarrow+\infty$. On the other hand,
$$f_\mu(c_n)\leq F_\mu(\sqrt{\frac{c_n}{c}}v_n)=F_\mu(v_n)+o_n(1)\rightarrow f_\mu(c).$$
So \eqref{4.3} is proved.
\end{proof}

\begin{lemma}\label{lem4.3}~~Suppose that $V(x)$ satisfies $(V_2)$ and $\mu\geq\mu_1$, then for all $0<c<c^*$, we have
$$f_\mu(c)<f_\mu(\alpha)+f_\mu(c-\alpha),~~~\forall~0<\alpha<c.$$
\end{lemma}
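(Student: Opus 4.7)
The plan is to reduce the strict subadditivity to the strict monotonicity of the map $c \mapsto f_\mu(c)/c$ on $(0,c^*)$, a reduction made possible by Lemma \ref{lem4.1}(2), which ensures $f_\mu(c)<0$ throughout this range when $\mu\geq\mu_1$. Indeed, granted this monotonicity, fix $0<\alpha<c<c^*$ and set $\beta:=c-\alpha\in(0,c)$. Since $\alpha<c$ and $\beta<c$, strict decrease gives $f_\mu(\alpha)/\alpha>f_\mu(c)/c$ and $f_\mu(\beta)/\beta>f_\mu(c)/c$; multiplying these by the positive numbers $\alpha$ and $\beta$ respectively and adding yields $f_\mu(\alpha)+f_\mu(\beta)>(\alpha+\beta)f_\mu(c)/c=f_\mu(c)$, which is the desired inequality.

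To prove the strict monotonicity, I would fix $0<c_1<c_2<c^*$, set $\theta:=\sqrt{c_2/c_1}>1$, and use the only scaling that stays within the $L^2$-constrained framework, namely $u\mapsto\theta u$, which maps $S(c_1)$ into $S(c_2)$. A direct computation gives the identity
\[F_\mu(\theta u)=\theta^2F_\mu(u)-\theta^2(\theta^{4/N}-1)L(u),\qquad L(u):=\frac{N}{2N+4}\int_{\R^N}|u|^{\frac{2N+4}{N}}.\]
Taking $\{u_n\}\subset S(c_1)$ to be a minimizing sequence for $f_\mu(c_1)$, which is bounded in $H^1(\R^N)$ by the coerciveness in Lemma \ref{lem4.1}(1), substituting and dividing by $c_2$, then passing to the limit produces
\[\frac{f_\mu(c_2)}{c_2}\leq\frac{f_\mu(c_1)}{c_1}-\frac{(c_2/c_1)^{2/N}-1}{c_1}\liminf_{n\to\infty}L(u_n).\]
Hence strict monotonicity follows as soon as $\liminf_{n\to\infty}L(u_n)>0$ for every such minimizing sequence.

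The main obstacle, and the only truly technical step, is establishing this non-vanishing of $L$ along minimizing sequences; here the assumption $\mu\geq\mu_1$ enters crucially through $f_\mu(c_1)<0$. I would argue by contradiction: suppose $L(u_n)\to0$ along a subsequence. Extracting further, $u_n\rightharpoonup u^*$ weakly in $H^1(\R^N)$. By the compact embedding $H^1(B_R)\hookrightarrow L^{\frac{2N+4}{N}}(B_R)$, if $u^*\not\equiv0$ one could pick $R$ so large that $\int_{B_R}|u^*|^{\frac{2N+4}{N}}>0$, forcing $\liminf L(u_n)>0$ and giving a contradiction; hence $u^*=0$, and $u_n\to0$ in $L^2_{\mathrm{loc}}$. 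Using the decay hypothesis $V(x)\to0$ at infinity from $(V_2)$, for any $\epsilon>0$ one may choose $R$ so that $V<\epsilon$ on $B_R^c$, and then
\[\int_{\R^N}Vu_n^2\leq V_0\int_{B_R}u_n^2+\epsilon\int_{B_R^c}u_n^2\leq V_0\int_{B_R}u_n^2+\epsilon c_1,\]
so $\int Vu_n^2\to0$. But then $F_\mu(u_n)=\tfrac12\|\nabla u_n\|_2^2-\tfrac{\mu}{2}\int Vu_n^2-L(u_n)\geq o(1)$, contradicting $F_\mu(u_n)\to f_\mu(c_1)<0$. Once this lower bound on $\liminf L(u_n)$ is secured, the remaining steps sketched above are immediate and complete the proof.
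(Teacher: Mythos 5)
Your proof is correct and follows essentially the same route as the paper: the admissible scaling $u\mapsto\sqrt{\theta}\,u$, a uniform positive lower bound on $\int_{\R^N}|u_n|^{\frac{2N+4}{N}}$ along minimizing sequences obtained by contradiction from $f_\mu(c)<0$ together with the decay of $V$ in $(V_2)$, and then the homogeneity argument yielding strict subadditivity. The only (immaterial) differences are that you establish the non-vanishing via weak limits and local compact embeddings where the paper uses a direct H\"older estimate on a ball, and that you phrase the conclusion through the strict decrease of $c\mapsto f_\mu(c)/c$ rather than through $f_\mu(\theta c)<\theta f_\mu(c)$.
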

\begin{proof}
For $0<c<c^*$, by Lemma \ref{lem4.1} $(2)$, we see that $f_\mu(c)<0$. Let $\{u_n\}\subset S(c)$ be a minimizing sequence of $f_\mu(c)$, then Lemma \ref{lem4.1} (1) shows that $\{u_n\}$ is bounded in $H^1(\R^N)$. Moreover, there exists $k_1>0$ independent of $n$ such that
\begin{equation}\label{4.4}
\ds\int_{\R^N}|u_n|^{\frac{2N+4}{N}}\geq k_1.
\end{equation}
Indeed, if $\int_{\R^N}|u_n|^{\frac{2N+4}{N}}\rightarrow 0$, then for any $\varepsilon>0$, there exists $n_0=n_0(\varepsilon)>0$ such that $\int_{\R^N}|u_n|^{\frac{2N+4}{N}}<\varepsilon$ for all $n>n_0$. By $(V_2)$, there exists $R=R(\varepsilon)>0$ such that $0\leq V(x)<\varepsilon$ for all $|x|\geq R$. Then for $n>n_0$, there exist constants $C_1,C_2>0$ independent of $n$ such that
$$
\begin{array}{ll}
\ds\int_{\R^N}V(x)|u_n|^2&=\ds\int_{B_R(0)}V(x)|u_n|^2+\ds\int_{\R^N\backslash B_R(0)}V(x)|u_n|^2\\[5mm]
&\leq V_0C_1|u_n|_{\frac{2N+4}{N}}^2+\varepsilon\ds\int_{\R^N\backslash B_R(0)}|u_n|^2\\[5mm]
&\leq(V_0C_1+C_2)\varepsilon,
\end{array}$$
hence $\int_{\R^N}V(x)|u_n|^2\rightarrow0$ by the arbitrary of $\varepsilon$ and $V(x)\geq0$. So $$f_\mu(c)=\lim\limits_{n\rightarrow+\infty}F_\mu(u_n)=\lim\limits_{n\rightarrow+\infty}\frac12\ds\int_{\R^N}|\nabla u_n|^2\geq0,$$ which is a contradiction.

Set $u_n^\theta:=\sqrt{\theta} u_n$ with $\theta>1$, then $u_n^\theta\in S_{\theta c}$ and by \eqref{4.4}, we have
$$F_\mu(u_n^\theta)-\theta F_\mu(u_n)=\frac{N}{2N+4}(\theta-\theta^{\frac{N+2}{N}})\ds\int_{\R^N}|u_n|^{\frac{2N+4}{N}}
\leq-\frac{Nk_1(\theta^{\frac{N+2}{N}}-\theta)}{2N+4}<0,$$
which implies that $f_\mu(\theta c)<\theta f_\mu(c)$ by letting $n\rightarrow+\infty.$ Then we easily conclude our result and the lemma is proved.
\end{proof}

\noindent $\textbf{Proof of Theorem \ref{th1.5}}$\,\,\
\begin{proof}~~$(1)$~~The proof of $(1)$ is given in Lemma \ref{lem4.1} $(1)$.

$(2)$~~For any $0<c<c^*$, by Lemma \ref{lem4.1} (2), $f_\mu(c)<0$. Let $\{u_n\}\subset S(c)$ be a minimizing sequence of $f_\mu(c)$, then by Lemma \ref{lem4.1} (1), $\{u_n\}$ is bounded in $H^1(\R^N)$. Hence we may assume that there exists $u_c\in H^1(\R^N)$ such that
\begin{equation}\label{4.5}
\left\{
 \begin{array}{ll}
 \ds u_n\rightharpoonup u_c,\,\,\, &\hbox{in}~H^1(\R^N), \vspace{0.2cm}\\
 \ds u_n\rightarrow u_c,\,\,\, &\hbox{in}~L^q_{loc}(\R^N),\ \ q\in[1,2^*),\\
 \ds u_n(x)\rightarrow u_c(x),\,\,\, &\hbox{a.e.~in}~\R^N.
 \end{array}
 \right.
 \end{equation}
Moreover, $u_c\not\equiv0$. By contradiction we just suppose that $u_c\equiv0$. By $(V_2)$, for any $\varepsilon>0$, there exists $R>0$ such that $0\leq V(x)<\varepsilon$ for all $|x|\geq R$. Then there exists a constant $C>0$ such that $\int_{\R^N\backslash B_R(0)}V(x)|u_n|^2<C\varepsilon$. We see from \eqref{4.5} and $(V_2)$ that $\int_{B_{R}(0)}V(x)|u_n|^2\rightarrow0$ as $n\rightarrow+\infty$. Hence $\int_{\R^N}V(x)|u_n|^2\rightarrow 0$. So by \eqref{1.6}, we have
$$f_\mu(c)=\lim\limits_{n\rightarrow+\infty}F_\mu(u_n)\geq\lim\limits_{n\rightarrow+\infty}-\frac{\mu}2\ds\int_{\R^N}V(x)|u_n|^2=0,$$
which is a contradiction. Therefore $\alpha:=|u_c|_2^2\in(0,c]$.

We next show that $u_c\in S(c)$. By contradiction, if $0<\alpha<c$, then $u_c\in S_\alpha$. By \eqref{4.5}, we have
\begin{equation}\label{4.6}
|u_n|_2^2=|u_c|_2^2+|u_n-u_c|_2^2+o_n(1).
\end{equation}
By the Brezis-Lieb Lemma and Lemma \ref{lem4.2}, we see that
$$f_{\mu}(c)=\lim\limits_{n\rightarrow+\infty}F_{\mu}(u_n)
= F_{\mu}(u_c)+\lim\limits_{n\rightarrow+\infty}F_{\mu}(u_n-u_c)\geq f_{\mu}(\alpha)+f_{\mu}(c-\alpha),$$
which contradicts Lemma \ref{lem4.3}. Then $|u_c|_{2}^2=c$. So $u_c\in S_c$ and then by the interpolation inequality, $u_n\rightarrow u_c$ in $L^p(\R^N),$ $p\in[2,2^*)$. Hence by \eqref{4.5},
$$f_{\mu}(c)\leq F_{\mu}(u_c)\leq \lim\limits_{n\rightarrow+\infty}F_{\mu}(u_n)=f_{\mu}(c).$$
So $u_c$ is a minimizer of $f_{\mu}(c)$ and then $u_c$ is a constraint critical point of $F_{\mu}$ on $S(c)$. Therefore, there exists $\lambda_c\in\R$ such that $ F'_{\mu}(u_c)-\lambda_cu_c=0$ in $H^{-1}(\R^N)$, i.e. $(u_c,\lambda_c)$ is a couple of solution to the following equation
$$-\Delta u-\mu V(x)u-|u|^{\frac{4}{N}}u=\lambda_cu~~~\hbox{in}~\R^N.$$
Moreover, the fact that $F_\mu(u_c)<0$ shows that
$$
\lambda_cc=\langle F'_{\mu}(u_c),u_c\rangle=2F_{\mu}(u_c)-\frac{2}{N+2}\ds\int_{\R^N}|u_c|^{\frac{2N+4}{N}}<0,
$$
i.e. $\lambda_c<0$.

\end{proof}

%%%%%%%%%%%%%%%%%%%%%%%%%%%%%%%% 正文结束%%%%%%%%%%%%
%%%%%%%%%%%%%%%%%%%%%%%%%%%%%%%% 正文结束%%%%%%%%%%%%

%%%%%%%%%%%%%%%%%%%%%%%%%%%%% 参考文献 %%%%%%%%%%%%%%
%%%%%%%%%%%%%%%%%%%%%%%%%%%%% 参考文献 %%%%%%%%%%%%%%


\begin{thebibliography}{99}
\addcontentsline{toc}{section}{\protect \heiti 参考文献} %加入目录
\bibitem{aru}
A. Ambrosetti, D. Ruiz, Multiple bound states for the
Schr\"{o}dinger-Poisson problem, Commun. Contemp. Math. 10 (3)
(2008), 391-404.

\bibitem{dm}
T. D'Aprile, D. Mugnai, Solitary waves for nonlinear Klein-Gordon-Maxwell and Schr\"{o}dinger-Maxwell equations, Proc. R. Soc. Edinb. Sect. A 134 (5) (2004), 893-906.


\bibitem{apa}
A. Azzollini, A. Pomponio, P. d'Avenia, On the Schr\"{o}dinger-Maxwell equations under the effect of a general nonlinear term, Ann. Inst. H. Poincar\'{e} Anal. Non Lin\'{e}aire 27 (2) (2010), 779-791.


\bibitem{bgg}
F. Bardos, A. Golse, D. Gottlieb, N. Mauser, Mean field dynamics of fermions and the time-dependent Hartree-Fock equation, J. Math. Pures Appl. 82 (6) (2003), 665-683.

\bibitem{bw}
T. Bartsh, Z. Q. Wang, Existence and multiplicity results for some superlinear elliptic problems on $\R^N$, Comm. Partial Differ. Equ. 20 (1995), 1725-1741.

\bibitem{bjl}
J. Bellazzini, L. Jeanjean, T. J. Luo, Existence and instability of standing waves with prescribed norm for a class of Schr\"{o}dinger-Poisson equations, Proc. London Math. Soc. 107 (3) (2013), 303-339.



\bibitem{bs2}
J. Bellazzini, G. Siciliano, Scaling properties of functionals and existence of constrained minimizers, J. Funct. Anal. 261 (9) (2011), 2486-2507.


\bibitem{bs1}
J. Bellazzini, G. Siciliano, Stable standing waves for a class of nonlinear Schr\"{o}dinger-Poisson equations, Z. Angew. Math. Phys. 62 (2) (2011), 267-280.

\bibitem{cdss}
I. Catto, J. Dolbeault, O. S\'{a}nchez, J. Soler, Existence of steady states for the Maxwell-Sch\"{o}dinger-Poisson system: exploring the applicability of the concentration-compactness principle, Math. Models Methods Appl. Sci. 23 (2013), 1915-1938.

\bibitem{gnn}
B. Gidas, W. M. Ni, L. Nirenberg, Symmetry of positive solutions of nonlinear elliptic equations in $\R^n$, Mathematical analysis and application Part A, Adv. in Math. Suppl. Stud. vol. 7, Academic Press, New York, (1981), 369-402.

\bibitem{gs}
Y. J. Guo, R. Seiringer, On the mass concentration for Bose-Einstein condensates with attractive interactions, Lett. Math. Phys. 104 (2014), 141-156.

\bibitem{jean}
L. Jeanjean, Existence of solutions with prescribed norm for semilinear elliptic equations, Nonlinear Anal. 28 (10) (1997), 1633-1659.

\bibitem{jl}
L. Jeanjean, T. J. Luo, Sharp nonexistence results of prescribed $L^2$-norm solutions for some class of Schr\"{o}dinger-Poisson and quasi-linear equations, Z. Angrew. Math. Phys. 64 (2013), 937-954.

\bibitem{k1}
H. Kikuchi, Existence and stability of standing waves for Schr\"{o}dinger-Poisson-Slater equation, Adv. Nonlinear Stud. 7 (3) (2007), 403-437.

\bibitem{k2}
H. Kikuchi, Existence and orbital stability of the standing waves for nonlinear Schr\"{o}dinger equations via the variational method, Doctoral Thesis (2008).

\bibitem{k}
M. K. Kwong, Uniqueness of positive solutions of $\Delta u-u+u^p=0$ in $\R^N$, Arch. Rational Mech. Anal. 105 (1989), 243-266.

\bibitem{ls}
E. H. Lieb, B. Simon, The Thomas-Fermi theory of atoms, molecules, and solids, Adv. Math. 23 (1) (1977), 22-116.

\bibitem{li}
P. L. Lions, Solutions of Hartree-Fock equations for Coulomb systems, Commun. Math. Phys. 109 (1) (1987), 33-97.

\bibitem{m}
N. J. Mauser, The Schr\"{o}dinger-Poisson-X$\alpha$ equation, Appl. Math. Lett. 14 (6) (2001), 759-763.





\bibitem{r1}
D. Ruiz, The Schr\"{o}dinger-Poisson equation under the effect of a
nonlinear local term, J. Funct. Anal. 237 (2) (2006), 655-674.

\bibitem{r2}
D. Ruiz, On the Schr\"{o}dinger-Poisson-Slater system: behavior of minimizers, radial and nonradial cases, Arch. Rational Mech. Anal. 198 (1) (2010), 349-368.




\bibitem{ss}
O. Sanchez, J. Soler, Long time dynamics of the Schr\"{o}dinger-Poisson-Slater system, J. Stat. Phys. 114 (1-2) (2004), 179-204.





\bibitem{stuart}
C. A. Stuart, Bifurcation from the essential spectrum norm for semilinear elliptic linearities, Math. Methods Appl. Sci. 11 (1989), 525-542.



\bibitem{sw}
J. T. Sun, T. F. Wu, Ground state solutions for an indefinite Kirchhoff type problem with steep potential well, J.
Differ. Equ. 256 (2014), 1771-1792.

\bibitem{w}
M. I. Weinstein, Nonlinear Schr\"{o}dinger equations and sharp interpolations estimates, Commun. Math. Phys. 87 (1983), 567-576.


\bibitem{wi}
M. Willem,  Minimax theorems, Progress in Nonlinear Differential
Equations and their Applications, 24. Birkh\"{a}user Boston, Inc.,
Boston, MA, 1996.

\end{thebibliography}
 \end{document}